\documentclass[11pt,reqno]{amsart}
\usepackage{amsfonts,amsmath,amssymb}
\usepackage[all]{xy}
\newtheorem{thm}{Theorem}[section]
\newtheorem{proposition}[thm]{Proposition}
\newtheorem{corollary}[thm]{Corollary}
\newtheorem{lemma}[thm]{Lemma}
\newtheorem{definition}[thm]{Definition}
\newtheorem{example}[thm]{Example}
\newtheorem{remark}[thm]{Remark}

\usepackage{graphicx}
\usepackage{epstopdf}

\author{A.N. Sergeev}
\address{Department of Mathematics,    Saratov State University,\ \
Astrakhanskaya 83, Saratov 410012  and Moscow Institute of Physics and Technology,
9 Institutskiy per., Dolgoprudny, Moscow Region, 141701, Russin Federation}

\email{SergeevAN@info.sgu.ru,\,sergeev.an@mipt.ru}

\title [On characters of irreducible modules  for Lie superalgebra $\frak q(n)$]{ On characters of irreducible modules  for Lie superalgebra $\frak q(n)$}

\begin{document}

\maketitle

\begin{abstract} In the paper  \cite{SZ1} Su and Zhang proved a formula for  the decomposition of  characters of irreducible finite dimensional  modules over Lie superalgebra $\frak q(n)$ as the sum of Euler characters. They gave a combinatorial formula for the  coefficients in that decomposition by  generalasing  of weights and caps diagrams introduced by J. Brundan and C. Ctroppel \cite {BS}. In the present paper we give a correct   version of their combinatorial formula. Our main technical  tool is a family  of creation operators.
 \end{abstract}
 
\tableofcontents

\section{Introduction}
For the first time a formula for irreducible finite-dimensional  modules over Lie superalgebra  $\frak q(n)$ appeared in the papers I. Penkov and V. Sergenova \cite{PS,PS1}.
In the  paper \cite{B1} J. Brundan gave a formula  for the decomposition of Euler characters in terms of irreducible characters for Lie superalgebra $\frak q(n)$. 
In the paper  \cite{SZ1} Su and Zhang proved a formula for  the inverse matrix using rising operators.  They expressed characters of irreducible  modules over Lie superalgebra $\frak q(n)$ in terms of Euler characters:
\begin{equation}\label{tech}
ch\,L(\lambda)=\sum_{\mu}(-1)^{|\lambda-\mu|}2^{\frac{z(\mu)-z(\lambda)}{2}}a_{\lambda,\mu}ch\,E(\mu).
\end{equation}
Here coefficients  $a_{\lambda,\mu}$ are given by an explicit combinatorial rule. But it turned out that this formula is not true  for all irreducible modules. 
The main result of the present  paper is a correct version of  the above formula. Here is the most simple example.
Consider Lie superalgebra  $\frak{q}(5)$. Let $\lambda=(3000-3)$ and $\mu=(00000)$.  Then the corresponding cap diagram for $\lambda$ is the following one
$$
\xymatrix{{0}\ar@/^2pc/@{}[rr]_{} &1& 2& 3\ar@/^1pc/@{}[r]_{}&4}
$$
 So we see  that these caps do not include each other. Therefore according to   \cite{SZ1}   $a_{\lambda,\mu}=2$. But according to our combinatorics  (see  Example \ref{contr}) $a_{\lambda,\mu}=1$. 
 Su and Zhang used rising operators which are natural analogues of the rising operators introduced by J. Brundan \cite{B}.

Our main technical tool  is the set of creation operators. These creation operators allow to express action of the rising operators  on weight diagrams   see Lemma \ref{main2} below.

\section{Weight and cap diagrams for Lie  superalgebra $\mathfrak q(n)$}

Weight and cap diagram were introduced by Su and Zhang in  \cite{SZ1}. Our weight and  cap diagram are the same but we use a different approach to them the same as in the paper \cite{Ser2} where we enumerate the weight diagrams. The main difficulty  in the $\frak q(n)$ case  is that we have more then one cap with the initial point at $0$. So instead of numbering weights diagram we need to enumerate caps in the cap diagram.

 By multi-set (see \cite{St}) we  will mean a pair $(m,X)$ where $X$ is a set and  $m$ is a function $m : X\rightarrow \Bbb Z_{\ge 0}$. If $x\in X$ then we call $m(x)$ the multiplicity of $x$. If $X$ is finite then by $|X|$ we mean the sum $\sum_{x\in X}m(x)$.  
\begin{definition} Let $(k,A,B)$ be a triple  of  two finite  subsets in $\Bbb Z_{>0}$ and $k\in\Bbb Z_{\ge0}$. Let us define the multi-set $f$ by the following rules:
 as the set $f$ is the set of pairs $\{(x,a)\in \Bbb Z_{\ge0}\times \{\circ,\times,>,<\}\}$ such that:

$$
a=\begin{cases} \times,\,\,x\in A\cap B\\
\circ,\,\,x\notin A\cup B\cup\{0\}\\
>,\,\,x\in A\setminus B\\
<,\,\, x\in B\setminus A
\end{cases},\quad
a=\begin{cases}\circ,\,\, x=0,\,k=0\\
\times,\,\,x=0,\, k>0\end{cases}.
$$
and we suppose that multiplicity of $(0,\times)\in f$ is equal to $k$. We will denote $k=k(f)$. So we consider $f$ as a relation between $A$ and $B$. In particular
$$
f^{-1}(a)=\{x\in \Bbb Z\mid (x,a)\in f\},\quad f(x)=\{a\in \{\circ,\times,>,<\}\mid (x,a)\in f\}
$$
and we consider $f^{-1}(a)$ as a multi-set  such that multiplicity of $x$ is the same as one of $(x,a)$.
\end{definition}

We will call $f$ a weight diagram and  call $k$ as the multiplicity of zero. Let us set 
 $$
n(f) = 2|f^{-1}(\times)|+|f^{-1}(>)|+|f^{-1}(<)|.
 $$ 
 \begin{remark} We need to express $n(f)$ in terms of $n(a)$?
 \end{remark}
\begin{example}  Let $k=2$ and  $A=B=\{\,1,5\}$ then   for the  corresponding weight diagram $f$ we have the following geometric picture
$$
\xymatrix{\times^2&\times&\circ &\circ &\circ&\times&\circ&\circ}
$$
\end{example}
.
 \begin{definition}
For every  weight diagram $f$ we set 
$$
\theta(f)=S(f^{-1}(\times))-\delta(f^{-1}\{>,<\},f^{-1}(\times))+k(f)
$$
where $S(f^{-1}(\times))$ means the sum of all element in $f^{-1}(\times)$.
Let us also denote by $n_f(a)$ the number of  $x\in \Bbb Z_{\ge0}$  such that  $x<a$ and $f(a)=<,>$. Then it is easy to check that 
$$
\sum_{ f(a)=\times} n_f(a)=\delta(f^{-1}\{>,<\},f^{-1}(\times))
$$
\end{definition}

\begin{definition} Let $f$ be a weight diagram.  
 Define the cap diagram  of the first type in the following  way. Let $f(a)=\times,\,a>0$.  Take the first $c$ on the right such that $f(c)=\circ$ and  the numbers of $\times$-s and $\circ$-s in the interval $(a,c)$  are the same. Then draw the cap joining $a$ and $c$ and do that for all positive $a$.  If multiplicity of $0$ is $k$ then draw $k$ caps with the beginning  at $0$ by the following rule: take the second $c$ on the right such that the number of  $\circ$-s minus the number of $\times$-s  in the interval $(a,c)$ is equal to $1$. Then draw the cap joining $0$ and $c$;   take the second $c$ on the right such that the number of  $\circ$-s minus the number of $\times$-s  in the interval $(a,c)$ is equal to $2$ and so on. We will denote by $ \mathcal C(f)$ - the cap diagram corresponding to the weight diagram $f$. If $C\in \mathcal C(f)$ then $s(C)$ means the initial point of $C$ and $t(C)$ means the ending point of $C$.  For $C, C'\in\mathcal C(f)$ we will write  $C\subset C'$ if cap $C$ is located under cap $C'$.
 
 The cap diagram of the second type  is defined as above but the first cap with the initial point at zero is ended at the point  $c$  such that  $c$ is the first point on the right  that $f(c)=\circ$ and  the numbers of $\times$-s and $\circ$-s in the interval $(a,c)$  are the same. 
\end{definition}
\begin{example}\label{adm}  Let $f^{-1}\{>,<\}=\emptyset$,  $f^{-1}(\times)=\{0^2,1,5\}$ then we have the following first type  cap diagram 
$$
\xymatrix{{0}\ar@/^2pc/@{}[rrrr]_{}\ar@/^5pc/@{}[rrrrrrrr]_{} &1\ar@/^1pc/@{}[r]&2 &3& 4& 5\ar@/^1pc/@{}[r]&6&7&8}
$$

The  second type cap diagram  for the same weight diagram is the following one
$$
\xymatrix{{0}\ar@/^2pc/@{}[rrr]_{}\ar@/^5pc/@{}[rrrrrrr]_{} &1\ar@/^1pc/@{}[r]&2 &3& 4& 5\ar@/^1pc/@{}[r]&6&7&8}
$$
\end{example}
\vskip0.3cm

Let $\Bbb Z^n_+$ denote the set of 
$\lambda = (\lambda_1,\dots,\lambda_n) \in \Bbb Z^n$
such that $\lambda_1 \geq \dots\geq \lambda_n$ and moreover
$\lambda_r = \lambda_{r+1}$ implies $\lambda_r = 0$ for each $r = 1,\dots,n-1$.
For $\lambda \in \Bbb Z_+^n$,
let $z(\lambda)$ denote the number of $\lambda_r \:(r=1,\dots,n)$
that equal zero.  Then we define two finite subsets in $\Bbb Z_{>0}$
 $$ 
A_{\lambda}=\{\lambda_i\mid \,\lambda_i>0\},\quad  B_{\lambda}=\{|\lambda_i|\mid \,\lambda_i<0\}
 $$
  and we also have the number $k_{\lambda}=[\frac{z(\lambda)}{2}]$.  So we can define the corresponding diagram $f_{\lambda}$. The following statement is easy to check.
\begin{proposition}\label{bijec} The correspondents $\lambda\rightarrow f_{\lambda}$ is a bijection  between $\lambda\in\Bbb Z^n_+$ such that $z(\lambda)$ is odd  and the set of diagram $f$ such that $n(f)+1=n$.  The same correspondence is a bijection between $\lambda\in\Bbb Z^n_+$ such that $z(\lambda)$ is even  and the set of diagram $f$ such that $n(f)=n$
\end{proposition}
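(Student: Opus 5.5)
The plan is to write down an explicit inverse map and check that both composites are the identity. The key observation is that an element $\lambda\in\Bbb Z^n_+$ is determined by three pieces of data: its set of positive entries, its set of negative entries, and the number $z(\lambda)$ of zeros. This works because the nonzero entries are pairwise distinct and strictly ordered.

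First I will record the count of entries. By the defining condition on $\Bbb Z^n_+$, the positive entries of $\lambda$ are distinct, so $|A_\lambda|$ equals the number of positive $\lambda_i$. Likewise, the absolute values of the negative entries are distinct, so $|B_\lambda|$ equals the number of negative $\lambda_i$. Hence
$$n=|A_\lambda|+|B_\lambda|+z(\lambda).$$
On the other hand, the positive points of $f_\lambda$ are described as follows: $A\cap B$ is marked $\times$, $A\setminus B$ is marked $>$, and $B\setminus A$ is marked $<$. The point $0$ carries $\times$ with multiplicity $k_\lambda$. So
$$n(f_\lambda)=2|A\cap B|+|A\setminus B|+|B\setminus A|+2k_\lambda=|A_\lambda|+|B_\lambda|+2k_\lambda.$$
Here I read $n(f)$ as counting $(0,\times)$ with its multiplicity $k$ inside $2|f^{-1}(\times)|$, since $f^{-1}(\times)$ is a multiset. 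Since $2k_\lambda=2[z(\lambda)/2]$ equals $z(\lambda)$ when $z(\lambda)$ is even and $z(\lambda)-1$ when it is odd, we get $n(f_\lambda)=n$ in the even case and $n(f_\lambda)+1=n$ in the odd case. So the map lands in the claimed set.

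Next I will construct the inverse. A diagram $f$ recovers a triple $(k,A,B)$ with $A=f^{-1}\{\times,>\}\cap\Bbb Z_{>0}$, $B=f^{-1}\{\times,<\}\cap\Bbb Z_{>0}$, and $k=k(f)$. Given $n$ with $n-n(f)\in\{0,1\}$, set $z=2k+(n-n(f))$. Then let $\lambda$ be the sequence consisting of the elements of $A$ in decreasing order, followed by $z$ zeros, followed by the elements $-b$, $b\in B$, in decreasing order. This $\lambda$ lies in $\Bbb Z^n_+$: its length is $|A|+|B|+z=n(f)-2k+z=n$, and equal consecutive entries can occur only among the zeros. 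Its parity of $z$ is the prescribed one. It satisfies $A_\lambda=A$, $B_\lambda=B$ and $k_\lambda=[z/2]=k$, so $f_\lambda=f$. Conversely, starting from $\lambda$, the reconstruction returns $\lambda$, since $\lambda$ is determined by $A_\lambda$, $B_\lambda$ and $z(\lambda)$, and $z(\lambda)=2k_\lambda+(n-n(f_\lambda))$ by the count above.

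The only delicate point is the convention for $n(f)$, namely that the zero point contributes $2k$ rather than $2$ or $0$. I would fix this explicitly at the start, since the parity statement depends on it. Everything else is routine bookkeeping.
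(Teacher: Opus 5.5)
Your argument is correct and complete. The paper gives no proof, saying only that the statement is easy to check, and your count $n(f_\lambda)=|A_\lambda|+|B_\lambda|+2k_\lambda$ together with the explicit inverse (elements of $A$ decreasing, then $2k+(n-n(f))$ zeros, then $-b$ for $b\in B$) is exactly that routine verification; moreover, your reading of $n(f)$ is the paper's own convention, since it declares $f^{-1}(\times)$ a multiset with $(0,\times)$ of multiplicity $k$ and defines $|X|$ for a multiset as the sum of multiplicities.
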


In the rest of the paper we will use the language of the weight diagrams and we will suppose  that the corresponding cap diagram  is of the first type. 

\begin{definition}  Let $f$ be a weight diagram and $r=|\mathcal C(f)|$. A numeration of the cap diagram $\mathcal C(f)$ is a bijection $\xi: [1.r]\rightarrow \mathcal C(f)$ such that conditions $\xi(i)\subset \xi(j)$ and $s(\xi(i))=s(\xi(j))=0$  imply that  $i>j$. A numeration $\xi$ is called standard if condition $i<j$ implies $s(\xi(i))\le s(\xi(j))$.  Every cap diagram has the standard numeration.We will denote such a numeration by $\xi_{st}$. We call the pair $(f,\xi)$ as a numbered weight diagram. We will denote by $s(\xi)$ the sequence $s(\xi(1)),\dots,s(\xi(r))$.
\end{definition}

\begin{remark} \label{rem} Let $\mathcal C^+(f)$ be the subset  of $\mathcal C(f)$ consisting of the caps with the initial points different from $0$. Then it is easy to see that any numeration in the above sense  uniquely defined by its restriction on $\mathcal C^+(f)$ and moreover the set  $\mathcal C^+(f)$ uniquely defined by initial points of its caps.
\end{remark}

 The rising operators  for   weight diagrams were  define by Su and Zhang (\cite{SZ1}). We defined the same type of operators  in a slightly  different form.

\begin{definition}\label{xi} Let  $f$  be  a  weight  diagram,  with a numeration $\xi$ of its cap diagram $\mathcal C(f),\,|\mathcal C(f)|=r$.  For  $i\in  [1,r]$   we  define the new weight diagram $g$  by the following rule:  we move the cross $s(\xi(i))$  to  the  point  $t(\xi(s))$.  All other crosses are still the same. If $\xi(i)=0$ then the multiplicity  of zero in the diagram $g$ is less by $1$ then  the  multiplicity of zero in $f.$  We also  get a numeration on the positive points of diagram $g$ prescribing the moving cross the same number $i$.     Therefore by Remark \ref{rem} we get the corresponding numeration on $\mathcal C(g)$. In such a case we will write $T_i(f,\xi)=(g,\eta)$.
\end{definition}

 Let $V$ be a vector space with the basis $v( g,\xi)$ where $(g,\xi)$ run over  all  numbered weight diagrams.  We also can define the action of rising operators on $V$ by the following formulae
 $$
T_i(v(g,\xi))= v((T_i(g,\xi)))
 $$
 Let also $U$ be a vector space with the basis $v(g)$ where $g$ run over all weight diagrams. We have a natural map $\sharp: V\rightarrow U$  such that $v(g,\xi)^{\sharp}=v(g)$.

 \begin{lemma}\label{prod}  Let $(g,\xi)$ be a numbered weight  diagram such that : if $C\supset C'$ then $\xi(C)<\xi(C')$. Let us    write  $s(\xi)=\{s(\xi(i_1))\le\dots\le s(\xi(i_r))\}$ where $\{i_1,\dots,i_r \}=[1,r],\,r=|\mathcal C(g)|$. Then   the following equality holds true
$$
((1+T_{r})\dots(1+T_{1})v(g,\xi))^{\sharp}=((1+T_{i_r})\dots(1+T_{i_1})v(g,\xi))^{\sharp}
$$

\end{lemma}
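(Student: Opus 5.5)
The identity will follow from a commutation property of the rising operators: under the hypothesis on $\xi$, any two operators $T_a,T_b$ that occur in the expanded products either commute after applying $\sharp$, or occur in the same relative order in both products. Expanding each side gives
$$
((1+T_r)\dots(1+T_1)v(g,\xi))^{\sharp}=\sum_{S\subset[1,r]}\Big(\prod_{j\in S}^{\to}T_j\, v(g,\xi)\Big)^{\sharp},
$$
where in the product the indices act in increasing order, that is, $T$ with the smallest index is applied first. On the right-hand side the indices of $S$ act in the order of increasing initial point $s(\xi(j))$. So it suffices to show, for each subset $S$, that the two orderings of $\{T_j\}_{j\in S}$ produce the same weight diagram. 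Numerations may differ, but $\sharp$ forgets them. The point is that a move $T_i$ only relocates the cross at $s(\xi(i))$ to $t(\xi(i))$, and the numeration lets us keep track of "the same cap" after other moves.

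The key step is a local lemma. Let $(h,\eta)$ be a numbered diagram obtained from $(g,\xi)$ by moves $T_j$, $j\in S'$, and let $a,b\notin S'$ be such that neither $\eta(a)\supset\eta(b)$ nor $\eta(b)\supset\eta(a)$ holds in a way that forbids swapping. Then $(T_aT_b(h,\eta))^\sharp=(T_bT_a(h,\eta))^\sharp$. I would prove this by cases on the relative position of the two caps.

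\emph{Disjoint caps.} Moving the cross of one cap to its endpoint does not change the count of $\times$'s minus $\circ$'s on the interval of the other cap, so the other cap is unchanged. Both orders then give the same diagram.

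\emph{Nested caps with different positive initial points.} The hypothesis $\xi(C)<\xi(C')$ for $C\supset C'$ means that in the standard product the outer cap is moved first. In the $s$-ordered product it is also moved first, since its initial point is smaller. So nested pairs are never swapped between the two products.

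\emph{Caps starting at $0$.} The definition of numeration already forces the outer one to carry the smaller index. In the $s$-ordering, ties at $0$ are broken consistently with $i_1<\dots$ among equal $s$, so again no swap is needed.

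The combinatorial core is the disjoint case: I must verify, from the rule defining caps via counts of $\circ$ and $\times$ on intervals, that the moved cross lands outside the other cap, and that the caps of the remaining indices (recovered through Remark \ref{rem}) are the same in both orders.

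The main obstacle is that after several moves, caps that were disjoint in $g$ might become nested, or vice versa. So the pairwise argument has to be made inside an induction. I would pass from the first ordering to the second by adjacent transpositions, as in bubble sort, sorting the sequence $(1,\dots,r)$ restricted to $S$ by the key $s(\xi(j))$. Each transposition swaps $T_a,T_b$ with $a<b$ but $s(\xi(a))>s(\xi(b))$. By hypothesis $\xi(a)$ cannot lie inside $\xi(b)$ in a way that violates monotonicity, so these caps are disjoint or $\xi(a)$ lies to the right. I would then show, by tracking that each move shifts a cross to the right past a balanced interval, that the caps indexed by $a$ and $b$ stay disjoint in the current diagram. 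The remaining work is a careful check of this invariance, especially for caps ending to the right of the caps at $0$, where the counting rule for the $k$ caps at zero differs.
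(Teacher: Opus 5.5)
Your reduction is sound as far as it goes. Expanding both products over subsets $S\subset[1,r]$, the left side applies the $T_j$, $j\in S$, in increasing index order and the right side in increasing $s$-order (ties at $0$ broken by index). You also correctly observe that every pair $a<b$ inverted between these orders has $\xi(a),\xi(b)$ disjoint in $\mathcal C(g)$. But the proof stops exactly where the content of the lemma lies, and there are two problems.

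\emph{First:} your local lemma only asserts $(T_aT_b(h,\eta))^{\sharp}=(T_bT_a(h,\eta))^{\sharp}$. In a chain of adjacent transpositions, the operators applied after the swapped pair read the numeration, so you need equality of numbered diagrams, not just equality after $\sharp$.

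\emph{Second, and more seriously:} you need the caps numbered $a$ and $b$ to still be the original disjoint caps at the moment of the swap, and you leave this as ``remaining work''. This is not automatic, because caps do change under moves. In the diagram with crosses exactly at $1$ and $3$, the caps are $(1,2)$ and $(3,4)$. After moving $1\to 2$, the cross now at $2$ has cap $(2,5)\supset(3,4)$. So an invariance of the kind you want can only hold for crosses that have not yet been moved.

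The missing idea is the following invariant, which also makes the bubble sort unnecessary. Call an order of applying $\{T_j\}_{j\in S}$ nest-respecting if $T_c$ precedes $T_d$ whenever $\xi(c)\supset\xi(d)$ in $\mathcal C(g)$. Claim: in any nest-respecting order, each $T_j$ moves the cross numbered $j$ from $s(\xi(j))$ to $t(\xi(j))$, the endpoints of its original cap.

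Induct on time. The positions altered so far are $s(\xi(z))$ and $t(\xi(z))$ for earlier $z$. If one of them lay in $(s(\xi(j)),t(\xi(j))]$, then non-crossing of $\mathcal C(g)$ and distinctness of cap ends would force $\xi(z)\subset\xi(j)$, contradicting the order. A positive cap is determined by the symbols in $(s,t]$, so it is unchanged.

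For a zero cap, all outer zero caps have already been applied. So it is currently the outermost zero cap, and by Remark \ref{rem} it carries the smallest remaining zero index. The current multiplicity at $0$ equals its depth. The symbols in $(0,t]$ are unchanged, since positive caps starting left of $t$ lie inside it and have not moved. Hence it ends at the same point.

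Both the index order (by the hypothesis on $\xi$) and the $s$-order (ties at $0$ by index) are nest-respecting. So for each $S$ the two sides give literally the same numbered diagram.

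The paper uses this non-interference only implicitly, and only locally. It claims that operators in different top-level blocks commute, and then peels off the outermost cap of a block, which has index $1$ and the smallest initial point, by induction. Your subset expansion needs the fact in this stronger, prefix-independent form, so you must prove it explicitly.
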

\begin{proof} 

It is easy to see that we can suppose that $g$ is core free. Let $C_1,\dots, C_p$ be all caps such that they are not contained in any other cap of the cap diagram $g$.
Let  $(g_1,\xi_1),\dots,(g_p,\xi_p)$ be the corresponding sub-diagrams  of $g$.   It is  also easy to see that $T_{\alpha},\,T_{\beta}$ pairwise commute  if $\alpha,\beta$ belong to different  $g_i,g_j$. Therefore it is enough  to prove Lemma for the case when in cap diagram of $g$  there is the cap which contains  any other cap. If it  is so  then $ \xi(i_1)\supset   \xi(i)$ for any $i\ne i_1$. By our assumption on $\xi$ we have $i_1<i$ for any $i\ne i_1$. Therefore $i_1=1$. Let $h$ be the diagram obtained from $g$ by deleting $\xi(1)$ and $\eta$ is the restriction of $\xi$ on to $\{2,\dots,s\}$. By induction Lemma is true for $(h,\eta)$ and therefore  it is true for $(g,\xi).$
\end{proof}

For every weight diagram $f$ we have Euler  virtual module $E(f)$ and irreducible module $L(f)$ (see \cite{B1}).

The  following Theorem   is due to Brundan (see\cite {B1}) but we formulate it here in the form due to Su and Zhang (\cite{SZ1}).

\begin{thm}  Let $|\mathcal C(g)|=r.$ Then the  following equality holds true
\begin{equation}\label{Brun}
ch\,E(g)=\sum_{f}2^{k(f)-k(g)}ch\,L(f)
\end{equation}
where  sum is taken over all $f$ such that $|\mathcal C(f)|=r,\,g=(T_r^{\varepsilon_r}\dots T_1^{\varepsilon_1}(f,\xi_{st}))^{\sharp}$, $\varepsilon_i=0,1$ for $i=1,\dots,r.$ 
\end{thm}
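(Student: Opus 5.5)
This statement is Brundan's theorem from \cite{B1}, rewritten in the language of weight and cap diagrams. My plan is therefore a translation argument rather than a new representation-theoretic proof. I would start from Brundan's formula expressing $ch\,E(\lambda)$ as a sum of $ch\,L(\mu)$. In \cite{B1} this formula is obtained from the canonical basis of $\mathcal V^{\otimes n}$ for the quantum group of type $\mathfrak b_\infty$, which gives the Kazhdan--Lusztig-type multiplicities $[E(\lambda):L(\mu)]$. Those multiplicities are nonzero only for $\mu$ obtained from $\lambda$ by certain ``lowering'' operators, each occurring with multiplicity a power of $2$ coming from the Clifford parts of the modules. Using the bijection of Proposition \ref{bijec}, I would transport Brundan's index set and coefficients from weights $\lambda\in\Bbb Z^n_+$ to weight diagrams $f_\lambda$. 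This requires checking that the power $2^{k(f)-k(g)}$ equals Brundan's factor $2^{\lfloor z(\mu)/2\rfloor-\lfloor z(\lambda)/2\rfloor}$, which holds because $k_\lambda=[z(\lambda)/2]$.

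The core step is to identify Brundan's combinatorial description of the set $\{f: [E(g):L(f)]\neq 0\}$ with the set of $f$ satisfying $g=(T_r^{\varepsilon_r}\cdots T_1^{\varepsilon_1}(f,\xi_{st}))^\sharp$. I would argue by induction on the number of caps $r$.

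1. Take the outermost caps of $\mathcal C(f)$. As in the proof of Lemma \ref{prod}, operators $T_\alpha$ and $T_\beta$ belonging to non-nested components commute, so the problem factorises over the maximal caps.
2. For a single maximal cap $C$, applying $T_i$ with $\xi_{st}(i)=C$ moves the cross at $s(C)$ to $t(C)$. Deleting $C$ produces a smaller diagram, to which the inductive hypothesis applies.
3. The caps starting at $0$ need a separate check. Their nested ordering, forced by the numeration condition, should match the order in which Brundan's operators remove the $k$ copies of the zero entries. The ``second $c$ on the right'' rule in the first-type cap diagram is exactly what encodes the $\mathfrak b_\infty$ behaviour at the zero node.

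I would also check that distinct choices of $\varepsilon$ yield distinct $g$ for fixed $f$. That way each $L(f)$ appears with coefficient exactly $2^{k(f)-k(g)}$ and there are no further multiplicities. The check should follow because each $T_i$ moves a different cross to a different endpoint.

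The main obstacle is the zero node. Brundan's statement is phrased through crystal and canonical-basis data at $0$, where the tensor factor behaves like a spin representation. Matching that data with the multiple caps from $0$ in the first-type diagram, including the requirement that the numeration satisfy $i>j$ when $\xi(i)\subset\xi(j)$ and both start at $0$, is the delicate point. For this step I would first verify small cases such as $k=1,2$ against \cite{B1}, then induct on $k$ by removing the outermost zero cap. Alternatively, since Su and Zhang already stated the theorem in essentially this form, I would cite \cite{SZ1} for the translation and only verify that our normalisation of $T_i$ and $\xi_{st}$ agrees with theirs.
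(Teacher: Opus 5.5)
The paper gives no proof of this theorem. It quotes it as Brundan's result \cite{B1} in the reformulation of Su and Zhang \cite{SZ1}. Your fallback is therefore exactly the paper's route: cite \cite{SZ1} and check that the conventions for $T_i$, $\xi_{st}$ and $k=[z/2]$ agree. That check should include the cap-diagram type. The paper works with first-type diagrams, which in its examples go with $n=n(f)+1$, while $n=n(f)$ needs the second type. This citation is the only part of your proposal that actually establishes the statement.

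The self-contained translation you put first would not work as written. You never state Brundan's rule, so the ``core step'' just restates the claim. By your own description, Brundan's formula is indexed from $g$: the $f$ are obtained from $g$ by lowering operators. The statement is indexed from $f$: one rises from $(f,\xi_{st})$ to $g$. The missing idea is this inversion: $f$ is a Brundan lowering of $g$ if and only if $g=(T_r^{\varepsilon_r}\cdots T_1^{\varepsilon_1}(f,\xi_{st}))^\sharp$ for some $\varepsilon$, with matching multiplicities. An induction on the maximal caps of $\mathcal C(f)$, where $f$ is the summation variable, does not touch this.

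The claim that the problem factorises over maximal caps also ignores that the moves destroy the component structure. For $f^{-1}(\times)=\{1,3\}$ the caps $1\to2$ and $3\to4$ are unnested. After $T_1$ the diagram $\{2,3\}$ has the cap $3\to4$ nested inside $2\to5$. At the zero node, all caps from $0$ share their source, and their endpoints are fixed by the global pattern of free $\circ$'s. So nothing factorises exactly where you locate the difficulty, and ``check $k=1,2$, then induct on $k$'' is not an argument.

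Injectivity of $\varepsilon\mapsto T_\varepsilon(f)$ also does not follow from ``each $T_i$ moves a different cross''. The $T_i$ act successively on recomputed cap diagrams, and distinct creation sequences can give the same diagram: $Q_3Q_0(\emptyset)=Q_0Q_1(\emptyset)$ in Example \ref{contr}.

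The whole point of the paper is that the cap-diagram combinatorics of \cite{SZ1} fails precisely for caps starting at $0$. A self-contained version would therefore need a real argument, for instance controlling the $T$'s through the creation operators of Lemma \ref{main2}, together with an explicit comparison with Brundan's lowering operators.
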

As a corollary of this formula we have the following equality.
\begin{corollary}\label{formula}  Let $g$ be a weight diagram, then the following equality holds true
$$
L(g)=\sum_{f}(-1)^{\theta(f)+\theta(g)}2^{k(f)-k(g)}d_{gf}E(f)
$$
 where $d_{gf}$ is the number of sequences $(i_1,\dots,i_r)\in\Bbb Z_{\ge0}^r$ such that $g=(T_1^{i_1}\dots T_r^{i_r}(f,\xi_{st}))^{\sharp}$.
\end{corollary}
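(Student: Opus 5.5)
The plan is to invert the unitriangular matrix given by Brundan's formula (\ref{Brun}). Write $E=AL$, where $A_{gf}=2^{k(f)-k(g)}a_{gf}$ and $a_{gf}$ is the number of $\varepsilon\in\{0,1\}^r$ with $g=(T_r^{\varepsilon_r}\dots T_1^{\varepsilon_1}(f,\xi_{st}))^{\sharp}$.

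First I would rescale. Put $\tilde E(g)=2^{k(g)}E(g)$ and $\tilde L(f)=2^{k(f)}L(f)$. Then (\ref{Brun}) becomes
$$\tilde E(g)=\sum_f a_{gf}\tilde L(f).$$
This can be encoded on the space $U$. Define the linear map $\Phi\colon U\to U$ by
$$\Phi(v(f))=\big((1+T_r)\dots(1+T_1)v(f,\xi_{st})\big)^{\sharp}.$$
Then $a_{gf}$ is the coefficient of $v(g)$ in $\Phi(v(f))$.

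Second, I would invert $\Phi$ formally. Since $(1+T_i)^{-1}=\sum_{m\ge0}(-1)^mT_i^m$, the candidate inverse is
$$\Psi(v(f))=\Big(\sum_{i_1,\dots,i_r\ge 0}(-1)^{i_1+\dots+i_r}T_1^{i_1}\dots T_r^{i_r}v(f,\xi_{st})\Big)^{\sharp}.$$
The coefficient of $v(g)$ here is $\pm d_{gf}$, up to a sign. I then need two facts.
\begin{itemize}
\item The sums are locally finite. Each $T_i$ strictly raises a dominance-type order: a cross moves right, or the multiplicity of zero drops. So only finitely many sequences reach a given $g$.
\item The sign $(-1)^{\sum i_j}$ depends only on $f$ and $g$, and equals $(-1)^{\theta(f)+\theta(g)}$. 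I expect that each application of a single $T_i$ changes $\theta$ by an odd amount. Moving a cross from $a$ to $t$ across a cap changes $S$ by $t-a$, which is odd, since the interior of a cap contains equally many $\times$ and $\circ$. The term $\delta$ changes by the number of $<,>$ symbols under the cap. A cap from $0$ also lowers $k$ by $1$. Checking this parity carefully is routine.
\end{itemize}

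The main obstacle is proving $\Psi\Phi=\mathrm{id}$. The products are over $T_i$ acting on numbered diagrams, and after applying $T_i$ the numeration is no longer standard. So $\Phi$ is not simply an operator product on $U$ that can be inverted factor by factor. I would handle this with Lemma \ref{prod}. It says that for numerations compatible with nesting, the image under $\sharp$ of $(1+T_r)\dots(1+T_1)$ does not depend on reordering the factors according to initial points. I would extend the same argument to the geometric series $\sum_m(-1)^mT_i^m$: reduce to core-free diagrams, and use that operators attached to different outermost caps commute. This lets me compose $\Phi$ and $\Psi$ on the level of $V$, where $\prod(1+T_i)$ and $\prod\sum_m(-T_i)^m$ cancel telescopically, before applying $\sharp$. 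Consistency of the transported numerations $\eta$ from Definition \ref{xi} is what makes this work, and that is the step I would check most carefully.

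Finally, reading off coefficients gives $\tilde L(g)=\sum_f(-1)^{\theta(f)+\theta(g)}d_{gf}\tilde E(f)$. Undoing the rescaling yields the stated factor $2^{k(f)-k(g)}$.
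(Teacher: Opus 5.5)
\textbf{Gap in the step you flag as the main obstacle.} Your overall strategy is the paper's: rescale, encode (\ref{Brun}) by $\Phi$, invert $\prod(1+T_i)$ formally, and read the sign off the parity of $\theta$. The problem is that you compose as $\Psi\Phi$. This forces you to replace $v(g_\varepsilon,\xi_{st})$ by $v(g_\varepsilon,\eta_\varepsilon)$, with $(g_\varepsilon,\eta_\varepsilon)=T_r^{\varepsilon_r}\cdots T_1^{\varepsilon_1}(f,\xi_{st})$, inside the infinite sum $\sum(-1)^{i_1+\dots+i_r}T_1^{i_1}\cdots T_r^{i_r}$. That replacement is false term by term.

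\emph{Nesting counterexample.} Take $f^{-1}(\times)=\{1,2\}$, $k(f)=0$, no $<,>$, with caps $(1,4)\supset(2,3)$. Then $(g_{(1,1)},\eta_{(1,1)})=T_2T_1(f,\xi_{st})$ has crosses $\{3,4\}$. Label $1$ sits on the inner cap $(4,5)$ and label $2$ on the outer cap $(3,6)$, so the hypothesis of Lemma \ref{prod} fails. The sums really differ: the diagram $\{4,5\}$ occurs, for $(i_1,i_2)=(1,1)$, with the standard numeration, but never with $\eta_{(1,1)}$.

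\emph{Unnested counterexample.} Nesting is not the only issue. For $m\ge 2$, $T_i^m$ carries a cross out of its cap and past other crosses, so operators attached to different outermost caps no longer commute. On $\{2,4\}$, moving the cross at $4$ once and then the cross at $2$ twice gives $\{4,5\}$. Moving the cross at $2$ twice and then the cross at $4$ once gives $\{5,6\}$. Consequently, for $(g_{(1,0)},\eta_{(1,0)})=T_1(f,\xi_{st})=(\{2,4\},\eta_{(1,0)})$, whose caps are not nested, the coefficient of $\{4,5\}$ is $0$ instead of $-1$. The discrepancies $-1$ and $+1$ cancel only after summing over $\varepsilon$. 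So $\Psi\Phi=\mathrm{id}$ is true, but no extension of Lemma \ref{prod} to geometric series can give it termwise.

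\textbf{The fix: compose the other way, as the paper does.} The paper works in the dual bases $e^*_g,v^*_f$. Since $\Phi$ is unitriangular, a one-sided inverse suffices, so write
$v(f,\xi_{st})=(1+T_r)\cdots(1+T_1)(1+T_1)^{-1}\cdots(1+T_r)^{-1}v(f,\xi_{st})$ and expand the inverse factors.
\begin{itemize}
\item The non-standard numerations that now appear are those of $(g,\xi)=T_1^{i_1}\cdots T_r^{i_r}(f,\xi_{st})$.
\item By Lemma \ref{main2} these come from presentations $Q_{a_r}\cdots Q_{a_1}(f_{core})$, in which a later cap never contains an earlier one. So outer caps carry smaller labels, which is exactly the hypothesis of Lemma \ref{prod}.
\item Only the finite product $(1+T_r)\cdots(1+T_1)$ has to be moved past these numerations, so each $T_i$ acts at most once and stays inside its cap.
\end{itemize}
Hence $((1+T_r)\cdots(1+T_1)v(g,\xi))^\sharp=\Phi(v(g))$, which gives $\Phi\Psi=\mathrm{id}$.

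\textbf{Minor point in your sign check.} $t-a$ is odd only when an even number of symbols $<,>$ lie under the cap. What is always odd is $(t-a)-(n_f(t)-n_f(a))$. For a cap starting at $0$ you need $t\equiv n_f(t)\pmod 2$.
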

\begin{proof}  Let us  set $v_f=2^{k(f)}ch\,L(f),\, e_g=2^{k(g)}E(g)$, Then   formula (\ref{Brun}) can be rewritten in the form
\begin{equation}\label{Brun2}
e_g=\sum_{f}v_f
\end{equation}
where  sum is taken over all $f$ such that $g=(T_r^{\varepsilon_r}\dots T_1^{\varepsilon_1}(f,\xi_{st}))^{\sharp}$ and $\varepsilon_i=0,1$ for $i=1,\dots,r$.
Let $v^*_f$ and $e^*_g$ are dual families then the following equality holds true
\begin{equation}\label{Brun1}
v_f^*=\sum_{g}e^*_g,\quad g=(T_r^{\varepsilon_r}\dots T_1^{\varepsilon_1}(f,\xi_{st}))^{\sharp},\, \varepsilon_i=0,1
\end{equation}
Indeed  we have  by formula (\ref{Brun2})
$$
v^*_f(e_g)=\begin{cases}1,\,\,\,g=(T_1^{\varepsilon_1}\dots T_r^{\varepsilon_r}f)^{\sharp}\\
0,\,\,\, otherwise
\end{cases}
$$
and we proved  formula (\ref{Brun1}).  Further we have  
$$
e^*_{f,\xi_{st}}=(1+T_r)\dots(1+T_1)(1+T_1)^{-1}\dots(1+T_r)^{-1}e^*_{f,\xi_{st}}
$$
$$
=\sum_{i_1,\dots,i_r}(1+T_r)\dots(1+T_1)(-1)^{i_1+\dots+i_r}T_1^{i_1}\dots T_r^{i_r}e^*_{f,\xi_{st}}
$$
$$
=\sum_{(g,\xi)=T_1^{i_1}\dots T_r^{i_r}(f,\xi_{st})}(-1)^{i_1+\dots+i_r}(1+T_r)\dots(1+T_1)e^*_{g,\xi}
$$
Now let us apply to above equality operation $\sharp$. Then we get
$$
e^*_f=(e^*_{f,\xi_{st}})^{\sharp}=\sum_{g,\xi}(-1)^{i_1+\dots+i_r}((1+T_r)\dots(1+T_1)e^*_{g,\xi})^{\sharp}
$$
$$
=\sum_{g,\xi}(-1)^{i_1+\dots+i_r}(v^*_{g,\xi})^{\sharp}=\sum_{g,\xi}(-1)^{i_1+\dots+i_r}v^*_{g}
$$
$$
=\sum_{i_1,\dots,i_r}(-1)^{i_1+\dots+i_r}v^*_{g},\,
$$
where  $g=(T_1^{i_1}\dots T_r^{i_r}(f,\xi_{st}))^{\sharp}$. Now it is not difficult to verify that  $i_1+\dots+i_r=\theta(f)+
\theta(g)$ by $\mod 2$.  Indeed it is enough to consider case $i_p=1$. Then we have only one cap. Let $a>0$ be its source and $b$ its target. Then $b-a\equiv n_f(b)-n_f(a)+1\, (\mod\, 2)$  If $a=0$ then $b-0\equiv n_f(b)+2\,(\mod \,2)$. Therefore we have 
$$
i_1+\dots+i_r\equiv\,\sum (b_i-a_i-n_f(a_i)+n_f(b_i))+k(f)-k(g)=\theta(f)+\theta(g)
$$

Therefore we have 
$$
e^{*}_f=\sum_{g=(T_1^{i_1}\dots T_r^{i_r}(f,\xi_{st}))^{\sharp}}(-1)^{\theta(f)+\theta(g)}e^{*}_g=\sum_{g}(-1)^{\theta(f)+\theta(g)}d_{g,f}v^{*}_g
$$
Therefore for transpose matrix we have
$$
v_g=\sum_{f}(-1)^{\theta(f)+\theta(g)}d_{gf}e_f
$$
where sum is taken over all $f$ such that  $g=(T_1^{i_1}\dots T_r^{i_r}(f,\xi_{st}))^{\sharp}$.
\end{proof}

\section{Creation operators}

\begin{definition} Let $f$ be a weight diagram such that $|f^{-1}(\times)|=|\mathcal C(f)|=r$ and $g$ be any weight diagram. Let us denote  by $X(g,f)$  the set of sequences $(i_1,\dots,i_r)\in\Bbb Z_{\ge0}^r$ such that $g=(T_1^{i_1}\dots T_r^{i_r}(f,\xi_{st}))^{\sharp}$.
\end{definition}
The main goal of  this paper is to give a  combinatorial formula for the number $|X(g,f)|$ and the  next definition gives the most useful tool to deal with  the rising operators. 

\begin{definition}\label{create} Let $f$ be a  weight diagram and $a\in \Bbb Z\setminus f^{-1}\{>,<\}$.  Then   $Q_a(f)$  can be defined be the following rules  (for  $a\in f^{-1}\{>,<\}$ operator  $Q_a(f)$ is not defined) :

$1)$ $ Q_a( f)^{-1}(>)=f^{-1}(>),\,\,  Q_a( f)^{-1}(<)=f^{-1}(<);\,\,$

$2)$ if $b\le a$ then $Q_a(f)(b)=f(b)$;

$3.1)$ if $a>0$,  set $M=\{b\in f^{-1}(\times)\mid b\ge a\}$. The corresponding new positions  of the elements of $M$ can be define by the following rule. Take the rightmost  element  $b$ from $M$ and   move it to  the second  $\circ $ on the right.  Then do  the same for the  rightmost  element in   $M\setminus\{b\}$ and  so on. 

$3.2)$ if $a=0$, set  $M=\{b\in f^{-1}(\times)\mid b> a\}$ The corresponding new positions  of the elements of $M$ can be define by the following rule. Take the rightmost  element  $b$ from $M$ and   move it to  the second  $\circ $ on the right.  Then do  the same for the  rightmost  element in   $M\setminus\{b\}$ and  so on. 

$k(Q_0(f))=k(f)+1.$

For any diagram $f$ we will denote by $f_{core}$  the diagram obtaining from $f$ by replacing all crosses by symbols $\circ$.

\end{definition}

\begin{remark} From geometric point of view we simply add one more cross to diagram $f$ at the position $a$.  If $a>0$ then all the  symbols  on the left on $a$ do not change their  positions. All symbols $<,>$ to the right on $a$ do not change their position also. But symbols $\times$ (including the one which was at the position $a$ if any) move to the right  on the second free symbol $
\circ$. If $a=0$ then we do not move crosses at position zero.
\end{remark}

\begin{example}
Consider the following  weight diagram $f$ where the first symbol located at point $0$
$$
\xymatrix{\times^2&\times&\circ &\circ &\circ&\times&}
$$
We want to construct it by means of creation operators.
We start with the following diagram $f_{core}$
$$
\xymatrix{\circ&\circ&\circ &\circ &\circ &\circ& }
$$
Then we  see that $Q_0(f_{core})$ is the following diagram
$$
\xymatrix{\times &\circ&\circ&\circ &\circ &\circ&} 
$$
 In the same way   for  $Q_1Q_0(f_{core})$ we have 
$$
\xymatrix{\times&\times&\circ&\circ &\circ &\circ&}
$$
Then  we see that  $Q_0Q_1Q_0(f_{core})$ is equal to
$$
\xymatrix{\times^2&\circ&\circ&\times &\circ &\circ&}
$$
Therefore $Q_1Q_0Q_1Q_0(f_{core})=f.$
It is  also easy to check  that  
$$
f=Q_1Q_3Q_0Q_0(f_{core})=Q_5Q_1Q_0Q_0(f_{core}).
$$

\end{example}
\begin{remark} Every weight diagram  can be represented in the form 
$$
f=Q_{a_r}\dots Q_{a_1}(f_{core})
$$
where $a_i$ is the initial point of the cap number $i$ with standard numeration on $\mathcal C(f)$.
\end{remark}
\begin{definition} Let us set 
$$
\omega(a,i)=\begin{cases} 0,\,\,a>0\\
1,\,\,a=0,\, i>0\\
0,\,\, a=0,\,i=0
\end{cases}
$$
 We recall that for   $a\in f^{-1}(\times)$  we   denote by $n_f(a)$ the number $x\in\Bbb Z_{\ge0}$ such that $f(x)=<,>$ and $x<a$.
\end{definition}

The following Lemma allows to express the action of the operators  $T_i$ in terms of creation operators.

\begin{lemma}\label{main2}   Let  $f=Q_{b_r}\dots Q_{b_1}(f_{core})$ where $b_i$ is the initial point of the cap number i with standard numeration
on $C(f)$. Then  the following equality holds true
\begin{equation}\label{mainq}
T_1^{i_1}\dots T_r^{i_r}(Q_{b_r}\dots Q_{b_1}(f_{core}))=Q_{a_r}\dots Q_{a_1}(f_{core})
\end{equation}
where $a_p-n_f(a_p)=b_p-n_f(b_p)+i_p+\omega(b_p,i_p),\,p=1,\dots,r.$
\end{lemma}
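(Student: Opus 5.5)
Since the $T$'s are applied in the order $T_r$ first and $T_1$ last, I will prove the lemma by induction on $r$, peeling off the cap with the largest index. The statement does not make this explicit, but I read $n_f(a_p)$ as computed in the diagram where $a_p$ sits, and I will use that only the symbols $<,>$ enter $n_f$. Since neither the $T_i$ nor the $Q_a$ ever move these symbols, $n_f$ is the same function for $f$, $f_{core}$ and all intermediate diagrams. The quantity $x-n_f(x)$ is then just the position of $x$ counted among points not occupied by $<,>$. So I would first delete all $<,>$ symbols and renumber the remaining points. This reduces the lemma to the case $f^{-1}\{>,<\}=\emptyset$, where the claim reads $a_p=b_p+i_p+\omega(b_p,i_p)$.

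The key step is a local computation for a single operator. Write $f=Q_{b_r}(f')$ with $f'=Q_{b_{r-1}}\dots Q_{b_1}(f_{core})$. By the standard numeration, $b_r$ is the largest initial point and $\xi_{st}(r)$ is the cap starting at $b_r$. I will show directly from Definition \ref{create} that
\[
T_r\bigl(Q_{b}(f')\bigr)=Q_{b+1+\omega(b,1)}(f'),
\]
with the induced numeration, and hence by iteration $T_r^{i}Q_b(f')=Q_{b+i+\omega(b,i)}(f')$. The reasoning runs as follows. $Q_b$ inserts a cross at $b$ and pushes every cross at positions $\ge b$ two free places to the right. Because the cap from $b$ with $b$ maximal has no cap starting inside it, its target is the first free $\circ$ after the pushed crosses have settled. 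Moving the cross from $b$ to that target under $T_r$ therefore yields the same configuration as inserting at $b+1$. For $b=0$ the first-type cap from $0$ ends at the \emph{second} suitable $\circ$, which accounts for the extra shift $\omega(0,i)=1$ on the first application only. After that the cross is at a positive point and behaves as in the case $b>0$.

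Then I would handle the remaining operators $T_1^{i_1}\dots T_{r-1}^{i_{r-1}}$. The point to establish is that these operators act on $Q_{a_r}(f')$ as $Q_{a_r}$ applied to their action on $f'$; that is, $T_p$ for $p<r$ commutes with $Q_{a_r}$ once numerations are transported as in Definition \ref{xi}. Granting this, the induction hypothesis applied to $f'$ (which has $r-1$ caps and the standard numeration restricted) gives $Q_{a_{r-1}}\dots Q_{a_1}(f_{core})$. Applying $Q_{a_r}$ then finishes the proof.

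I expect this commutation to be the main obstacle. The reason is that after the $T_r$ moves, $a_r$ may lie to the left of caps with smaller index, or inside them. Moving crosses of $f'$ could then change which crosses $Q_{a_r}$ pushes, and it could change the cap structure, particularly the caps from $0$ with multiplicity $k$ and the second-type versus first-type convention. To handle it I would argue via Remark \ref{rem}: a numeration is determined by the initial points of the positive caps. Both $T_p Q_{a}$ and $Q_{a}T_p$ preserve the relative order of the numbered crosses counted among free positions. So it suffices to compare the free-position coordinate of each numbered cross, and this coordinate is invariant under pushing by two $\circ$'s. I would check the boundary cases, where a cross lands exactly at $a_r$ or the operation involves $0$, by hand on small diagrams such as Example \ref{adm}.
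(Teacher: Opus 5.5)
Your outline is the paper's proof. You discard the symbols $<,>$ and induct on $r$ by peeling off the cap $\xi_{st}(r)$ with the largest initial point. You then get $T_r^{i_r}(f)=Q_{a_r}(\mathrm{res}\,f)$ from the one-cap computation; yours is right, including the extra shift $\omega(0,i)=1$ coming from the innermost first-type cap $0\to 2$. Finally you pass $T_1^{i_1}\dots T_{r-1}^{i_{r-1}}$ through $Q_{a_r}$.

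The gap is that this last step, which you rightly single out as the only one with content, is not actually proved. The reason you sketch does not work. Neither $T_pQ_a$ nor $Q_aT_p$ preserves the relative order of the numbered crosses, since $T_p$ carries the cross at $s(\xi(p))$ past every cross nested under $\xi(p)$. Moreover, the claim that the two composites produce the same configuration is exactly what has to be shown, and checking small diagrams by hand does not show it.

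The missing idea is that $Q_a$ inserts a balanced block. This is what the paper means when it says the other rising operators ``jump over the cap with number $r$''.

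For $a>0$, processing from the right, every cross at $x\ge a$ moves to exactly $x+2$. So $Q_a(h)$ is $h$ with the two symbols $\times\circ$ inserted at $a,a+1$ and every symbol at $x\ge a$ moved to $x+2$. For $a=0$, one adds a cross at $0$ and inserts $\circ\circ$ at $1,2$.

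Positive caps are the parenthesis matching ($\times$ opens, $\circ$ closes). Counting from the inside, the $j$-th cap from $0$ ends at the $2j$-th unmatched $\circ$. Inserting a matched pair, or two unmatched $\circ$'s at the front, leaves the matching of all other symbols unchanged.

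Set $\sigma(x)=x$ for $x<a$ and $\sigma(x)=x+2$ for $x\ge a$; for $a=0$, set $\sigma(0)=0$ and $\sigma(x)=x+2$ for $x>0$. Then each cap of $h$ from $s$ to $t$ becomes the cap of $Q_a(h)$ from $\sigma(s)$ to $\sigma(t)$. The only new cap is $a\to a+1$, or for $a=0$ the new innermost cap $0\to 2$. By the numeration rule behind Remark \ref{rem}, that new cap receives the label $r$, while all other caps from $0$ keep their labels.

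So $T_p$ applied to $Q_a(h)$ moves the cross at $\sigma(s)$ to $\sigma(t)$, which is $Q_a(T_p h)$ with the transported numeration. Thus $T_pQ_a=Q_aT_p$ for every $p<r$, and your induction closes.
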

\begin{proof}
Let $r=1$ and $f^{-1}(\times)=\{b\}$. Then  $T_1^{i}(Q_b(f_{core}))=Q_a(f_{core})$ where
$$
a=\begin{cases} b +n_f(a)-n_f(b)+i,\,\text{if}\, b>0\\
 b +n_f(a)-n_f(b)+i+1,\, \text{if}\, b=0,i>0\\
 b +n_f(a)-n_f(b)+i,\,\text{if}\,b=0,\,i=0
\end{cases}
$$

So we see that  $a-n_f(a)=b-n_f(b)+i+\omega(b,i)$. Therefore in this case the formula is true. Let $r>1$ then we  set $res\, f= Q_{b_{r-1}}\dots Q_{b_1}(f_{core})$. Since  $f$ has the standard numeration  and according to the case $n=1$  we have $T_{r}^{i_r}(f)=Q_{a_r}(res\,f)$. By induction we have   
$$
T_1^{i_1}\dots T_{r-1}^{i_{r-1}}(res\,f)= Q_{a_{r-1}}\dots Q_{a_1}(f_{core})
$$
Therefore we need to prove the equality$$
T_1^{i_1}\dots T_{r-1}^{i_{r-1}}T_r^{i_r}(f)= Q_{a_r}T_1^{i_1}\dots T^{i_{r-1}}_{r-1}(res\,f)
$$
When we apply all other rising operators  to $T_{r}^{i_r}(f)$ then these  operators have to jump  over the cap with number $r$. But this is the same as to apply operator  $Q_{a_r}$ to the function  $T_1^{i_1}\dots T_{r-1}^{i_{r-1}}(res\,f)$. Therefore we proved equality (\ref{mainq}).
\end{proof}
\section{Character formula}
\begin{definition}\label{cond} Let $a=(a_1,\dots,a_r)$ be a sequence of  nonnegative integers.
We will denote by $Y(a)$ the set of sequences of nonnegative integers $y=(y_1,\dots,y_r)$  such that  

$1)$ $0\le y_1\le\dots\le y_r$ and if $y_i=y_{i+1}$ then $ y_i=y_{i+1}=0$

$2)$ if $a_i\ge 2$ then  $ y_i\le a_i$;

$3)$ if  $a_i=0,1$ then $y_i=a_i$
\end{definition}

\begin{corollary}  Let $f,g$  be  weight diagrams and $\xi_{st}$ is the standard numeration of $\mathcal C(f)$. Then there  exists a bijection between $X(g,f)$ 
and sequences $(a_1,\dots,a_r)$ such that  $g=Q_{a_r}\dots Q_{a_1}(f_{core})$ and   $s(\xi_{st})\in Y(a).$

\end{corollary}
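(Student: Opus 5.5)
The plan is to derive the statement from Lemma \ref{main2}. I read it as a reparametrization of $X(g,f)$: instead of recording the exponents $(i_1,\dots,i_r)$, record the positions $(a_1,\dots,a_r)$ of the creation operators. Write $b_p=s(\xi_{st}(p))$, so that $f=Q_{b_r}\dots Q_{b_1}(f_{core})$ and $s(\xi_{st})=(b_1,\dots,b_r)$.

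\textbf{Construction of the map.} For $(i_1,\dots,i_r)\in X(g,f)$, Lemma \ref{main2} gives
$$
g=(T_1^{i_1}\dots T_r^{i_r}(f,\xi_{st}))^{\sharp}=Q_{a_r}\dots Q_{a_1}(f_{core}),
$$
where $a_p$ is determined by
$$
a_p-n_f(a_p)=b_p-n_f(b_p)+i_p+\omega(b_p,i_p).
$$
The map $x\mapsto x-n_f(x)$ is strictly increasing on the complement of $f^{-1}\{>,<\}$. This complement is exactly where $Q_x$ is defined, so $a_p$ is uniquely determined by $i_p$. We send $(i_1,\dots,i_r)$ to $(a_1,\dots,a_r)$.

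\textbf{Injectivity.} The quantity $i_p+\omega(b_p,i_p)$ determines $i_p$. For $b_p>0$ this is immediate. For $b_p=0$, the value $i_p=0$ gives $0$, while $i_p>0$ gives $i_p+1\ge 2$; so the value $1$ never occurs and the map is injective.

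\textbf{Image.} The remaining task is to show the image is exactly the set of sequences satisfying the stated constraints. Condition $2)$ and condition $3)$ of Definition \ref{cond} say that each $b_p$ is at most $a_p$, with an exception when $b_p=0$. In that case $a_p$ either equals $0$ or jumps by at least two in the shifted coordinate $x-n_f(x)$, corresponding to the forbidden value $1$ above. Hence the admissible $a_p$ are exactly those with $a_p\ge b_p$ in shifted coordinates, skipping the value $1$ when $b_p=0$. Conversely, any such $a$ yields exponents $i_p\ge0$, and by Lemma \ref{main2} it gives a sequence in $X(g,f)$ precisely when $Q_{a_r}\dots Q_{a_1}(f_{core})=g$.

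Condition $1)$ is a statement about $s(\xi_{st})$ itself: the initial points in the standard numeration are nondecreasing, and they can repeat only at $0$ because every positive point carries at most one cross. I will verify it directly from the definition of the standard numeration.

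\textbf{Main obstacle.} The hard step is matching the paper's indexing convention in ``$s(\xi_{st})\in Y(a)$'' with the inequalities $a_p\ge b_p$ obtained above, including the special treatment of the values $0$ and $1$ for caps starting at zero. I also need to check that the shift by $n_f$ is harmless. Here I would use that the symbols $>$ and $<$ never move under $T_i$ or $Q_x$, so $n_f$ is the same function for $f$, $g$ and every intermediate diagram. I would first check the case $r=1$ against the explicit formula in the proof of Lemma \ref{main2}, and then argue coordinatewise, since both the constraints and the correspondence $i_p\leftrightarrow a_p$ are coordinatewise.
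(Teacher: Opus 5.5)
Your strategy is the same as the paper's. You use Lemma \ref{main2} to replace $(i_1,\dots,i_r)$ by $(a_1,\dots,a_r)$ coordinatewise, get injectivity from the injectivity of $i\mapsto i+\omega(b_p,i)$, and then identify the image; the paper does this last step as a necessity/sufficiency case analysis on $a_p=0$, $a_p=1$, $a_p\ge2$. Your description of the image in the shifted coordinate is right: $a_p-n_f(a_p)\ge b_p-n_f(b_p)$, and $a_p-n_f(a_p)\ne1$ when $b_p=0$.

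The gap is in the step you defer as the main obstacle. Conditions $2)$ and $3)$ of Definition \ref{cond} concern the \emph{unshifted} numbers: when $b_p=0$ they say $a_p=0$ or $a_p\ge2$, not $a_p-n_f(a_p)\ge2$, although your paraphrase reads them as the latter. The two readings agree exactly when $1\notin f^{-1}\{>,<\}$. The fact that $>$ and $<$ never move does not bridge the difference, and nothing can, because the statement as written fails otherwise. Take $f$ with $f(1)={>}$, $k(f)=1$ and no other crosses, and $g$ with $g(1)={>}$ and a single cross at $2$ (for $\mathfrak q(3)$ these are the weights $(1,0,0)$ and $(2,1,-2)$). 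Then $f_{core}=g_{core}$, $g=Q_2(f_{core})$ and $(0)\in Y((2))$, so the set of sequences $a$ has one element. But the cap of $f$ joins $0$ and $3$, so $T_1^{i}(f)$ has its cross at $0$ for $i=0$ and at $i+2$ for $i\ge1$, and $X(g,f)=\emptyset$.

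To repair this, either assume $1\notin f^{-1}\{>,<\}$ (for instance $f$ core free, as the paper tacitly does), or read $a_i,y_i$ in $2)$ and $3)$ as $a_i-n_f(a_i)$ and $y_i-n_f(y_i)$. With either fix the matching is a two-line check via strict monotonicity of $x\mapsto x-n_f(x)$, and your argument is complete. The paper's own sufficiency step has the same blind spot: setting $i_p=\delta-1$ for $b_p=0$ requires $\delta\ge2$, which fails for $a_p=2$ when $f(1)\in\{>,<\}$.
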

\begin{proof} 
First let us prove that if $a,b\notin f^{-1}(\times)$ then $b<a$ is equivalent to $b-n(b)<a-n(a)$. Indeed if $a>b$ then $a-b>n(a)-n(b)$ and $a-n(a)>b-n(b)$. If  $a-n(a)>b-n(b)$ and $b>a$ then $b-n(b)>a-n(a)$. So we came to contradiction.

Now  let  $(i_1,\dots,i_r)$  be such that $g=(T_1^{i_1}\dots T_r^{i_r}(f,\xi_{st}))^{\sharp}$. Then by   Lemma \ref{main2}  we get the sequence   $(a_1,\dots,a_r)$ such that
 $$
 a_p-n_f(a_p)=b_p-n_f(b_p)+i_p+\omega(b_p,i_p),\,p=1,\dots,r
 $$
 Therefore $a_p-n_f(a_p)-(b_p-n_f(b_p))=i_p+\omega(b_p,i_p)\ge 0$. Therefore $a_p\ge b_p$.

So if $a_p=0$ then $b_p=0$ and $i_p=\omega(b_p,i_p)=0$. If $a_p=1$ then $b_p=0,1$. If $b_p=0$ then $i_p=\omega(b_p,i_p)=0$ and we get  a contradiction $1=0$. Therefore $b_p=1,\,i_p=\omega(b_p,i_p)=0$. If $a_p\ge2$ then as before $a_p\ge b_p$. So we proved that conditions $1),2),3)$ are necessary. Let us prove that they are sufficient. We only need to indicate the values of $(i_1,\dots,i_r)$. If $a_p=0$ then from condition $3)$ it follows that $b_p=0$ and we set $i_p=0$. If $a_p=1$ then $b_p=1$ and we set $i_p=0$. Let $a_p\ge 2$ and $b_p\le a_p$. Then  $a_p-n_f(a_p)-(b_p-n_f(b_p))=\delta\ge0$. If $\delta=0$ then we set $i_p=0$. If $\delta>0$ then we set $i_p=\delta$ for $b_p>0$ and $i_p=\delta-1$ for $b_p=0.$ Corollary is proved.
\end{proof}
\begin{example}\label{contr} Consider  diagrams $f^{-1}(\times)=\{0^2\},\,f^{-1}\{<,>\}=\emptyset$ and   $g^{-1}(<,>)=\emptyset,\,g^{-1}(\times)=\{0,3\}.$  We have $f_{core}=g_{core}$ and $f=Q_0Q_0(\emptyset).$ It is also easy to see that $g=Q_3Q_0(\emptyset)=Q_0 Q_1(\emptyset).$ Therefore we have two sequences  $(a_1,a_2)=(0,3),(1,0)$ and one sequence  $(b_1,b_2)=(0,0)$. The pair of sequences $(0,3), (0,0)$ satisfies  conditions of the above corollary but the pair  $(0,1), (0,0)$  does not.
 The cap diagram for $g$ is  the following 
$$
\xymatrix{{0}\ar@/^2pc/@{}[rr]_{} &1& 2& 3\ar@/^1pc/@{}[r]_{}&4}
$$
We see that these caps do not include each other but we have only one sequence. 
\end{example}

By Corollary of Lemma \ref{main2} in order  to find the  number of solutions of the equation  $g=(T_1^{i_1}\dots T_r^{i_n}(f,\xi_{st}))^{\sharp}$  we need to find the number of sequences $(a_1,\dots,a_r)$ satisfying the conditions of the Corollary. But the first conditions   $g=Q_{a_r}\dots Q_{a_1}(f_{core})$ is the most difficult to verify. So we replace it by another equivalent condition which is more easy to verify. First note  that every presentation $g=Q_{a_r}\dots Q_{a_1}(g_{core})$  defines the numeration on the cap diagram of $g$.

\begin{definition} Let $g=Q_{a_r}\dots Q_{a_1}(g_{core})$.  Define the numeration $\xi$ on the cap  diagram $g$ by the following rule: if $a_i>0$ then  the initial point of the cap  $\xi(i)$ is the final position of the cross which was added by  the operator $Q_{a_i}$; the caps with the initial points at $0$  get  their numeration according to Remark \ref{rem}.
\end{definition}

It turns out that we can replace  the sequence  $a_1,\dots, a_n$  by the corresponding numeration $s(\xi)=(s(\xi(1)),\dots,s(\xi(r)))$.

\begin{lemma}\label{eqv1}  Let $g=Q_{a_r}\dots Q_{a_1}(g_{core})$ and $\xi$ be the corresponding numeration of $\mathcal C(g)$. Then for diagram $f=Q_{b_r}\dots Q_{b_1}(f_{core})\,$ with the standard numeration,   relation $s(\xi_{st})\in Y(a)$ is equivalent to the relation $s(\xi_{st})\in Y(s(\xi))$. 

\end{lemma}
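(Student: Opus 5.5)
We compare the sequence $a=(a_1,\dots,a_r)$ of creation points with the sequence $s(\xi)=(c_1,\dots,c_r)$, $c_i=s(\xi(i))$, of final positions of the added crosses. The sequence $b=s(\xi_{st})$ is fixed and nondecreasing. So condition $1)$ of Definition \ref{cond} concerns only $b$ and is the same on both sides. It therefore suffices to show that conditions $2)$ and $3)$ hold for the pair $(b,a)$ exactly when they hold for the pair $(b,c)$. We do this coordinate by coordinate, after establishing some structural facts about how $c_i$ relates to $a_i$.

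\emph{Step 1: properties of $Q$.} From Definition \ref{create}:
\begin{itemize}
\item a cross added at $a_i=0$ stays at $0$;
\item a cross added at $a_i>0$ is pushed only to the right, so $c_i\ge a_i$;
\item every later $Q_{a_j}$ moves crosses at positions $\ge a_j$ (or $>0$) by jumping to the second free $\circ$, and never moves a cross to the left or across $0$.
\end{itemize}
Hence $c_i=0\iff a_i=0$. We also need the sharper statement $c_i=1\iff a_i=1$. Since $c_i\ge a_i$, the implication $c_i=1\Rightarrow a_i=1$ is immediate. For the converse, a cross sitting at $1$ is moved by a later $Q_{a_j}$ only if $a_j\le 1$. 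The case $a_j=0$ does not move crosses at $0$, but it does move those at positions $>0$. So the content of this step is to check, from the cap structure, that when the caps starting at $0$ and the cap starting at $1$ coexist, the numeration in Remark \ref{rem} still assigns the index $i$ to the cap starting at $1$. Using the monotonicity established at the start of the Corollary's proof ($x<y\iff x-n(x)<y-n(y)$ for points not in $f^{-1}\{<,>\}$), we reduce the positions to the "core" coordinate $x-n(x)$. In this coordinate the motion of crosses is a shift.

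\emph{Step 2: inequality $2)$.} For $a_i\ge2$ (equivalently $c_i\ge2$ by Step 1) we must show $b_i\le a_i\iff b_i\le c_i$. The direction "$\Rightarrow$" follows from $c_i\ge a_i$. For the converse, suppose $a_i<b_i\le c_i$. The cross $i$ was pushed from $a_i$ past $b_i$, and such pushes happen only when later creations $Q_{a_j}$ with $a_j\le a_i$ insert crosses to its left. We compare with $f$, which has the standard numeration, and look at the core coordinate $b_i-n(b_i)$. Since $f_{core}=g_{core}$ and the $b$'s are nondecreasing, each later $a_j\le a_i<b_i$ would force $b_j\le a_j<b_i$, contradicting the monotonicity of $b$, because $j>i$ implies $b_j\ge b_i$.

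The main obstacle is this converse direction of Step 2: showing that the push from $a_i$ to $c_i$ cannot jump over $b_i$. It requires careful bookkeeping of the "second free $\circ$" rule together with the standard ordering of $b$. I would first try an induction on $r$. Remove the last creation $Q_{a_r}$ and use the fact from the proof of Lemma \ref{main2} that applying $Q_{a_r}$ commutes with the remaining structure, so the equivalence for indices $<r$ is inherited. Then handle index $r$ directly, since $c_r$ is exactly where $Q_{a_r}$ leaves its cross (up to the rule for zero).
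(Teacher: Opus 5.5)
Your reduction to conditions $2)$ and $3)$ is fine. The forward implication needs only $c_i\ge a_i$ and $c_i=0\iff a_i=0$: if $c_i=1$ then $a_i=1=b_i$, and otherwise $b_i\le a_i\le c_i$, exactly as in the paper.

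\textbf{Step 1 is false.} The claim $a_i=1\Rightarrow c_i=1$, and with it your identification ``$a_i\ge2\iff c_i\ge2$'', does not hold. In Example \ref{contr}, $g=Q_0Q_1(\emptyset)$: the later $Q_0$ pushes the cross created at $1$ to the second free $\circ$, i.e.\ to $3$. So $a=(1,0)$ and $s(\xi)=(3,0)$. No argument ``from the cap structure'' can repair this. Take $f=Q_0Q_0(\emptyset)$, so $b=(0,0)$. Then $b\in Y((3,0))$, but $Y((1,0))=\emptyset$. Hence the reverse implication itself fails without an extra hypothesis on $a$: every $j$ with $a_j=0$ precedes the unique index $i$ with $a_i=1$. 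The paper derives this property from $b\in Y(a)$ in the forward direction and simply assumes it in the reverse one (``as we suppose''). In the later application it is supposed to come from admissibility of $\xi$; in Example \ref{contr} the numeration with $s(\xi)=(3,0)$ violates the relation of type $2)$ in Definition \ref{order}. Under this hypothesis no later creation has $a_j\le 1$, so the cross at $1$ never moves, $c_i=1$, and $b_i=1$. This is the missing idea for the case $a_i=1$.

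\textbf{Step 2 is circular as written.} Your contradiction ``$a_j\le a_i<b_i$ forces $b_j\le a_j<b_i$'' uses $b_j\le a_j$ for the later indices, which is what you are trying to prove. Induction on $r$ by deleting $Q_{a_r}$ does not supply it. Deleting the last creation moves earlier crosses back to the left, so the hypothesis $b_i\le c_i$ need not survive for the shorter word.

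The paper instead fixes $g$ and takes the \emph{largest} index $i$ with $a_i\ge 2$ and $a_i<b_i\le c_i$. Note $i\ne r$, because $c_r=a_r$. For every $j>i$:
\begin{itemize}
\item $b_j\le a_j$, by maximality, or by the already settled cases $a_j\in\{0,1\}$;
\item $b_j>b_i$, because $b_i>0$ and $b$ is strictly increasing on its nonzero entries.
\end{itemize}
Hence $a_j>a_i$ for all $j>i$, so no later $Q_{a_j}$ touches the cross at $a_i$. This gives $c_i=a_i<b_i$, a contradiction. With this downward choice of index and the hypothesis on the $0$'s and the $1$ in $a$, your outline becomes the paper's proof.
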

\begin{proof} Suppose  that $s(\xi_{st})=(b_1,\dots, b_r)\in Y(a)$. Let us prove that $s(\xi_{st})\in Y(s(\xi))$. Consider all possible cases. If $s(\xi(i))=0$ then from the definition of $\xi$ we see that $a_i\le s(\xi(i))=0$. Therefore $a_i=0$ and $b_i=0$. If  $s(\xi(i))=1$ then $a_i\le1$. If $a_i=0$ then from the definition of $\xi$ we again see that $s(\xi(i))=0\ne1$. Therefore $b_i=a_i=1$. Now let  $s(\xi(i))\ge 2$. Then  we  have  $b_i\le a_i\le  s(\xi(i))$. Therefore $s(\xi_{st})\in Y(s(\xi)).$ 
 Before going further  let us note that sequence $a=(a_1,\dots,a_r)$ has the following properties: if $a_i=0,a_j=1$ then $j>i$; there is at most one $1\le i\le r$ such that $a_i=1$. These properties easily follows from condition $s(\xi_{st})\in Y(a).$

Now let us prove the opposite statement. Let $s(\xi_{st})\in Y(s(\xi))$. If $a_i=0$ then $s(\xi(i))=0$ and $b_i=0$. If $a_i=1$ then as we suppose (we have only one $1$ and zeroes have numbers strictly less then $1$) for any $j>i$ we have $a_j\ne 0,1$. Therefore $g(i)=1$ and $b_i=1$.  Let $a_i\ge 2$. We need to  prove that $b_i\le a_i$. Suppose that there exists $i$ such that $a_i\ge 2,\,b_i\le g_i$ and $b_i>a_i$. We can suppose that $i$ is a maximal with such a property. Note that $i\ne r$ since $a_r=g_r$. Therefore  for any $j>i$ we have  $a_i<b_i<b_j\le a_j$. Therefore $a_i<a_j$ for any $j>i$  and by definition of $g$ we see that $g_i=a_i<b_i$. So we came to contradiction. Therefore $\xi_{st}\in Y(a)$ and we proved the Theorem.
\end{proof}

So we see in order to find $|X(g,f)|$ we need to describe  the numerations $\xi$ which correspond  to the relation $g=Q_{a_r}\dots Q_{a_1}(g_{core})$.
\begin{definition}\label{order}  Let us introduce the following partial order on $\mathcal C(g)$. We write  $C\dashv C'$ if one of the following conditions holds true:

$1)$  $C\supset C'$;

$2)$  $s(C)=0,\, s(C')=t(C)+1+n_g(s(C'))-n_g(t(C))$;

$3)$  $s(C)=1, s(C')=t(C)+1+n_g(s(C'))-n_g(t(C))$

A numeration $\xi$ of a cap diagram $\mathcal C(g)$  is called admissible if  it preserves the partial order. In other words  if $\xi(i)\dashv \xi(j)$ then $i<j$. 

Geometrically conditions $2)$ means that there is no symbol $\circ$ between  the final point of the cap $C$ and the initial point of the cap $C'$ and the same for condition $3)$.
\end{definition}

\begin{lemma}
  Conditions $g=Q_{a_r}\dots Q_{a_1}(f_{core})$ and $s(\xi_{st})\in Y(a)$ are equivalent to the conditions $\xi$ is admissible and $s(\xi_{st})\in Y(s(\xi)).$ 
\end{lemma}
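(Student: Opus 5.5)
By Lemma \ref{eqv1}, under the hypothesis $g=Q_{a_r}\dots Q_{a_1}(g_{core})$ with $\xi$ the associated numeration, the conditions $s(\xi_{st})\in Y(a)$ and $s(\xi_{st})\in Y(s(\xi))$ are equivalent. So the statement reduces to a description of the numerations coming from creation sequences. Precisely, I would show that a numeration $\xi$ of $\mathcal C(g)$ arises as the numeration attached to some presentation $g=Q_{a_r}\dots Q_{a_1}(g_{core})$ if and only if $\xi$ is admissible in the sense of Definition \ref{order}. I would also show that the presentation is then recovered uniquely from $\xi$. Here $f_{core}=g_{core}$, since the rising operators and the $Q_a$ do not move the symbols $<,>$.

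\textbf{Necessity.} Take $g=Q_{a_r}\dots Q_{a_1}(g_{core})$ and induct on $r$. The cross added last, by $Q_{a_r}$, lands at $s(\xi(r))$. Every cross to its right was pushed to the second free $\circ$, which means it jumps over the cap of the new cross. So the caps present before the last step survive, possibly shifted, and none of them can be nested inside $\xi(r)$. This gives condition 1): if $\xi(i)\supset\xi(j)$ then $i<j$.

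For conditions 2) and 3) I look at a cap $C$ with $s(C)=0$ or $1$ and a cap $C'$ starting at the first non-$<,>$ position after $t(C)$, with no $\circ$ in between. Suppose $C'$ had been created before $C$. A cross at $0$ (or at $1$ with $a_i=1$) added later would then push the cross of $C'$ two free positions to the right. That would leave a $\circ$ between $t(C)$ and $s(C')$, a contradiction. The identity $b<a\iff b-n(b)<a-n(a)$ from the proof of the Corollary to Lemma \ref{main2} is used throughout to ignore the symbols $<,>$.

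\textbf{Sufficiency.} Given an admissible $\xi$, I would let $\xi(r)$ be its last cap. By admissibility $\xi(r)$ is maximal for $\dashv$, so in particular it contains no other cap. Removing its cross gives a diagram $g'$, and I set $a_r$ to be the position of that cross in $g'$ after the crosses to its right are pulled back by two free positions each. I then need to check two things. First, $Q_{a_r}(g')=g$. Second, the restriction of $\xi$ is an admissible numeration of $\mathcal C(g')$; this step is routine, since removing a minimal innermost cap preserves nesting and adjacency relations. Induction then finishes the argument.

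\textbf{Main obstacle.} I expect the hard part to be the first of these checks, $Q_{a_r}(g')=g$: the crosses to the right of $a_r$ must be exactly the ones re-pushed to their positions in $g$. This fails precisely when a cap starting at $0$ or $1$ ends immediately before $s(\xi(r))$. In that case the cross of $\xi(r)$ would be read as belonging to that cap's chain, and conditions 2) and 3) exclude exactly this situation. Example \ref{contr} is the model case to check. I would handle it by first treating the case $s(\xi(r))>1$ with no zero-cap adjacent, and then the adjacent case separately using the shifted coordinates $x-n_g(x)$.
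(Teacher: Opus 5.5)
\textbf{The necessity argument for 2) and 3) is wrong.} Your reduction rests on the claim that the numeration attached to \emph{any} presentation $g=Q_{a_r}\dots Q_{a_1}(g_{core})$ is admissible. That claim is false. Your argument for condition 1) is fine, but Example \ref{contr} already refutes the rest.

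\begin{itemize}
\item There $g^{-1}(\times)=\{0,3\}$ also equals $Q_0Q_1(g_{core})$, i.e.\ $a=(1,0)$.
\item The cross created by $Q_1$ is pushed by $Q_0$ to the second free $\circ$, namely $3$.
\item So the attached numeration has $\xi(1)$ the cap joining $3,4$ and $\xi(2)$ the zero cap joining $0,2$.
\item Since $3=t(\xi(2))+1$, condition 2) of Definition \ref{order} gives $\xi(2)\dashv\xi(1)$, which this numeration violates.
\end{itemize}
Your argument for 2) has the geometry backwards. The later zero cross occupies exactly the two free positions $1,2$ that the old cross jumped over. Its cap therefore ends immediately before the pushed cross, so the push \emph{creates} the adjacency rather than destroying it. The same happens for 3) with $Q_1Q_1(\emptyset)$, which has crosses at $1,3$ and numbers the cap at $3$ before the cap at $1$.

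So admissibility is not a property of presentations alone. The paper derives it from the second hypothesis. The condition $s(\xi_{st})\in Y(a)$ forces every index $i$ with $a_i=0$ to precede the (at most one) index $j$ with $a_j=1$. A violation of 2) or 3) then yields, after deleting later caps, an earlier index with $a_j=1$ and a later one with $a_i\in\{0,1\}$, which is a contradiction. In the example, $Y((1,0))=\emptyset$.

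\textbf{Your sufficiency step also fails, and not only locally.} The restriction of an admissible numeration need not be admissible, because deleting a cap can create a new adjacency. Take $g^{-1}(\times)=\{0,3,5\}$ with no symbols $<,>$.
\begin{itemize}
\item The caps are $Z$ joining $0,2$, $A$ joining $3,4$ and $B$ joining $5,6$.
\item The only relation is $Z\dashv A$, so $\xi(1)=B,\ \xi(2)=Z,\ \xi(3)=A$ is admissible.
\item Deleting $A$ pulls $B$ back to start at $3$, adjacent to $Z$, so the restricted numeration is not admissible.
\item Reconstruction gives $g=Q_3Q_0Q_1(g_{core})$, i.e.\ $a=(1,0,3)$, and $Y(a)=\emptyset$.
\item On the other hand $s(\xi)=(5,0,3)$ and $(0,0,1)\in Y(s(\xi))$.
\end{itemize}
So for $f^{-1}(\times)=\{0^2,1\}$ the right-hand conditions of the statement hold while the left-hand ones fail. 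The converse direction therefore cannot be proved as stated. The same goes for the converse half of Lemma \ref{eqv1}, which both you and the paper invoke for the reconstructed $a$. A direct computation with the $T_i$ confirms this: $|X(g,f)|=2$, coming only from $(i_1,i_2,i_3)=(0,2,2),(0,2,4)$. By contrast, three admissible numerations satisfy the $Y$-condition.

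Finally, your ``main obstacle'' is misplaced. The identity $Q_{a_r}(g')=g$ holds whenever $\xi(r)$ contains no other cap, admissible or not. For example, deleting the zero cross from $\{0,3\}$ gives the diagram with crosses $\{1\}$, and $Q_0$ applied to it returns $\{0,3\}$.
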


\begin{proof} Suppose that $g=Q_{a_r}\dots Q_{a_1}(f_{core})$ and $\xi$ is the corresponding numeration. The cross with number $i$ appears as  the result of $Q_{a_i}\dots Q_{a_1}(f_{core})$  and the corresponding cap  does not contained any previous caps. Therefore relation $j<i$ implies  that $\xi(i)$ does not contain $\xi(j)$. Therefore if $C\supset C'$ then $\xi^{-1}(C)<\xi^{-1}(C')$. 

As we have already seen the sequence   $a_1,\dots, a_r$ 
has the following properties: if $a_i=0,a_j=1$ then $j>i$; there is at most one $1\le i\le r$ such that $a_i=1$.

Suppose now  that   for some $C,C'\in\mathcal C(g)$ we have   $s(C)=0,\, s(C')=t(C)+1+n_g(s(C'))-n_g(t(C))$ and $i=\xi^{-1}(C)>\xi^{-1}(C')=j$. First we see that $a_j=0$ and  we know that  $a_j=h(j)$ where $h=Q_{a_j}\circ \dots\circ Q_{a_1}(\emptyset)$. Therefore  in order to define $a_j$ we need to delete all caps with numbers grater then $j$ Let us delete first  all caps with the numbers grater or equal $i$. Since $j<i$  the cap number $i$ should be deleted. After that the  initial point of cap number $j$ becomes $1$ and it still be the same while we deleting the caps with numbers $i-1,\dots, j+1$. Therefore $a_j=1$, $a_i=0$ and $i>j$. This  contradicts properties of $a$. 
In the same way we can prove the third property from Definition \ref{order}. Therefore $\xi$ is admissible and by  Lemma \ref{eqv1} $s(\xi_{st})\in Y(s(\xi)).$

Now let $\xi$ be an admissible numeration on $\mathcal C(g)$ and $s(\xi_{st})\in Y(s(\xi)).$ 
 Let us prove by induction that there exist the sequence $a_1,\dots,a_r$ 
such that  $g=Q_{a_r}\dots Q_{a_1}(g_{core})$   and the corresponding admissible numeration is $\xi.$ By definition of $Q_{a_r}$ we have $a_r=\xi(r)$.  Let $C\in \mathcal C(g)$ is the cap  such that $s(C)=a_r$. Since $\xi$ is admissible we see that if  integer $c\ne a_r=s(C),\,t(C)$   and if  it is located   under the cap $C$ then $c\in g^{-1}(>,<)$.  

Define the  weight diagram  $h$ by the following rule: 
delete  one cross from  diagram $g$ at the position $a_r$;  if $a_r>0$ then all the  symbols  on the left on $a_r$ do not change their  positions. All symbols $<,>$ to the right on $a_r$ do not change their position also. All  symbols $\times$ on the right on $a_r$  move to the left   on the second free symbol $\circ$.

It is easy to see   that $Q_{a_r}(h)=g.$ Besides under inductive assumption we have $h=Q_{a_{r-1}}\dots Q_{a_1}(g_{core})$. Therefore 
$g=Q_{a_r}\dots Q_{a_1}(g_{core})$ and again by induction for given $\xi$ there is only one sequence $a=(a_1,\dots,a_r).$
By Lemma  \ref{eqv1} we have $\xi_{st}\in Y(a).$ Lemma is proved.
\end{proof} 
The following corollary easily follows from above Lemma.
\begin{corollary} Let $f,g$ be  weight diagrams such that $g_{core}=f_{core}$. Then $|X(g,f)|$ is equal to the number of  admissible numerations  $\xi$    of $\mathcal C(g)$  such that  $\xi_{st}\in Y(\xi)$  where $\xi_{st}$ is the standard numeration of $f$. 

We will denote the set of such  pairs by $Z(g,f)$  for fixed $f$ and by $Z(g)$ we will denote  the  set of all   such pairs for fixed $g$ and all possible $f$.  So  the following equality holds true:
$$
L(g)=\sum_{(\xi,f)\in Z(g)}(-1)^{\theta(g)+\theta(f)}2^{k(f)-k(g)}E(f)
$$

\end{corollary}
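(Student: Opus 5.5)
The plan is to chain the preceding results, so the corollary reduces to bookkeeping. Fix $f$ with $f_{core}=g_{core}$ and write $r=|\mathcal C(f)|$. By the Corollary of Lemma \ref{main2}, $X(g,f)$ is in bijection with the set of sequences $a=(a_1,\dots,a_r)$ such that $g=Q_{a_r}\dots Q_{a_1}(f_{core})$ and $s(\xi_{st})\in Y(a)$. Since $f_{core}=g_{core}$, the first condition is the same as $g=Q_{a_r}\dots Q_{a_1}(g_{core})$.

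Every such presentation of $g$ determines a numeration $\xi$ of $\mathcal C(g)$, as in the definition preceding Lemma \ref{eqv1}. By the last Lemma, the pair of conditions on $a$ is equivalent to two conditions on $\xi$: that $\xi$ is admissible and that $s(\xi_{st})\in Y(s(\xi))$.

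It remains to check that $a\mapsto\xi$ is a bijection between these sequences and the admissible numerations with $s(\xi_{st})\in Y(s(\xi))$.
\begin{itemize}
\item \emph{Surjectivity.} The converse half of the Lemma's proof starts from an admissible $\xi$. It sets $a_r=s(\xi(r))$, deletes that cross to obtain $h$ with $Q_{a_r}(h)=g$, and inducts to reconstruct $a$. This shows every admissible $\xi$ arises from some $a$.
\item \emph{Injectivity.} The same induction shows this $a$ is unique, since at each step $a_r$ and $h$ are forced by $\xi$.
\end{itemize}
Hence $|X(g,f)|=|Z(g,f)|$.

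The main point needing care is injectivity in the other direction: distinct sequences $a$ must give distinct numerations $\xi$. This follows because $\xi$ determines $a$ through the inverse procedure just described. So the map $a\mapsto\xi$ and the inverse construction are mutually inverse.

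For the character formula, start from Corollary \ref{formula}:
$$L(g)=\sum_f(-1)^{\theta(f)+\theta(g)}2^{k(f)-k(g)}d_{gf}E(f),$$
where $d_{gf}=|X(g,f)|$. Only $f$ with $f_{core}=g_{core}$ contribute, since rising operators do not move the symbols $<,>$. Substituting $d_{gf}=|Z(g,f)|$ and rewriting $\sum_f|Z(g,f)|(\cdots)$ as a sum over pairs $(\xi,f)\in Z(g)$ gives the stated equality.
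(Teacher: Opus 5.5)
Your proposal is correct and takes essentially the paper's route. The paper likewise combines the Corollary of Lemma \ref{main2} with the last Lemma, whose proof ends by noting that $\xi$ determines the sequence $a$ uniquely (exactly your bijection $a\leftrightarrow\xi$), and then substitutes $d_{gf}=|Z(g,f)|$ into Corollary \ref{formula}; your separate surjectivity and injectivity checks just spell out what the paper compresses into ``easily follows from above Lemma.''
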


\begin{example}\label{adm0} 
 Let  $g$ be the same weight diagram as in  Example \ref{adm} and  $\mathcal C(g)=\{C_1,C_2,C_3,C_4,C_5\}$ all its caps with the standard numeration. We have  has  the following partial order
 $$
 C_1\dashv C_2\dashv C_3,\, C_2\dashv C_4. 
 $$

Therefore we get two  admissible numerations $\xi_1$ and $\xi_2$.

\begin{equation}\label{adm1}
\xymatrix{{0}\ar@/^2pc/@{}[rrrr]_{2}\ar@/^5pc/@{}[rrrrrrrr]_{1} &1\ar@/^1pc/@{}[r]_3&2 &3& 4& 5\ar@/^1pc/@{}[r]_4&6&7&8}
\end{equation}
\vskip0.5cm
\begin{equation}\label{adm2}
\xymatrix{{0}\ar@/^2pc/@{}[rrrr]_{2}\ar@/^5pc/@{}[rrrrrrrr]_{1} &1\ar@/^1pc/@{}[r]_4&2 &3& 4& 5\ar@/^1pc/@{}[r]_3&6&7&8}
\end{equation}

So we see that  $s(\xi_1)=(0,0,1,5)$,\, $s(\xi_2)=(0,0,5,1)$.  Let us describe the pairs such that $(\xi,f)\in Z(g)$
Let   $f=(x_1,x_2,x_3,x_4)$ and  $0\le x_1\le x_2\le x_3\le x_4$ then  for $s(\xi_1)$ we have 
$$
x_1=x_2=0,\,\, x_3=1,\, 2\le x_4\le5.
$$
For $s(\xi_2)$ we  have  only one pair
$$
x_1=x_2=0,\,\, x_3=0,\, x_4=1. 
$$
Therefore we have 
$$
ch\,L(0^2,1,5)=E(0^2,1,5)-E(0^2,2,5)+E(0^2,3,5)-E(0^2,4,5)+2E(0^3,1)
$$

\end{example}

Now let us give a description of $d_{g,f}$ in terms of  ordered sets. If $X$ is a usual set and $(Y,m)$   is a multi-set  then $\varphi : X\rightarrow Y$ we  call a bijection if it is a surjection and $|\varphi^{-1}(y)|=m(y)$ for every $y\in Y$.

\begin{definition}\label{phi} Let $g,f$ be two weight diagrams such that $f_{core}=g_{core}$. Let us denote by  $H(g,f)$  the set  of bijections $\varphi:\mathcal C(g)\rightarrow f^{-1}(\times)$  such that: 

$1)$  for $C\in \mathcal C(g)$ we have $\varphi(C)\le s(C)$;

$2)$ if $C\dashv C'$ then $\varphi(C)\le\varphi(C')$;

$3)$ if $s(C)=1$  then $\varphi (C)=1$.

For an ordered set $S,\,|S|=p$ we will denote by $K(S)$ the set of monotonic bijections from $S$ to integer segment  $[1, p]$ with the natural order. 

 Let us also define for any $\varphi\in H(g,f)$ the number $K(\varphi)$ by the following rule

$1)$ $m(\varphi)=1$ if $\varphi^{-1}(0)=\emptyset$;

$2)$ $m(\varphi)=|K(\varphi^{-1}(0)|$ if $\varphi^{-1}(0)\ne\emptyset.$
\end{definition}
\begin{proposition} \label{ordered} The following equality holds true
$$
d_{g,f}=\sum_{\varphi\in H(g,f)} m(\varphi)
$$
\end{proposition}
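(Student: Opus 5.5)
We prove the proposition by turning the count of the preceding Corollary into a count of maps, and then reading off the multiplicity $m(\varphi)$ as the number of numerations that induce a given map.

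By that Corollary (and Corollary \ref{formula}), $d_{g,f}=|X(g,f)|=|Z(g,f)|$. Here $Z(g,f)$ is the set of admissible numerations $\xi$ of $\mathcal C(g)$ such that $s(\xi_{st})\in Y(s(\xi))$, with $s(\xi_{st})=(b_1\le\dots\le b_r)$ the sorted multiset $f^{-1}(\times)$. To each such $\xi$ we attach the map
$$\varphi_\xi:\mathcal C(g)\rightarrow f^{-1}(\times),\qquad \varphi_\xi(\xi(i))=b_i .$$
The plan is to show that $\xi\mapsto\varphi_\xi$ maps $Z(g,f)$ onto $H(g,f)$, and that the fibre over $\varphi$ has exactly $m(\varphi)$ elements. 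Summing over $\varphi$ then gives the proposition.

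First we check that $\varphi_\xi\in H(g,f)$. It is a bijection onto the multiset $f^{-1}(\times)$, since $\xi$ is a bijection and multiplicities are respected.
\begin{enumerate}
\item Condition $1)$ of Definition \ref{phi}, $\varphi(C)\le s(C)$, is conditions $2)$ and $3)$ of Definition \ref{cond}. The entries $0$ and $1$ are forced to equal the corresponding $s(\xi(i))$.
\item Condition $3)$ of Definition \ref{phi} is the case $a_i=1$ of Definition \ref{cond}.
\item Condition $2)$ of Definition \ref{phi}: if $C\dashv C'$ then admissibility gives $\xi^{-1}(C)<\xi^{-1}(C')$. Since $b$ is nondecreasing, $\varphi(C)\le\varphi(C')$.
\end{enumerate}

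Next we identify the fibre over a fixed $\varphi\in H(g,f)$. The values $b_i$ are strictly increasing except on the block of zeros $b_1=\dots=b_k=0$, $k=k(f)$. Hence $\xi$ is forced on all caps with $\varphi(C)>0$: the cap sent to the $j$-th positive value of $f^{-1}(\times)$ must receive number $k+j$. On $\varphi^{-1}(0)$, which has $k$ elements, $\xi$ may be any bijection onto $[1,k]$. Such a $\xi$ satisfies $s(\xi_{st})\in Y(s(\xi))$ by condition $1)$ of Definition \ref{phi}.

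It remains to decide when $\xi$ is admissible.
\begin{enumerate}
\item Relations $C\dashv C'$ between caps with different $\varphi$-values are respected automatically, because $\varphi(C)\le\varphi(C')$ forces the numbers to increase.
\item Two distinct caps with equal positive value cannot both occur: the positive part of $f^{-1}(\times)$ has multiplicity one, so condition $1)$ of Definition \ref{cond} gives $y_i<y_{i+1}$ there.
\item Therefore admissibility reduces to $\xi|_{\varphi^{-1}(0)}$ being a monotone bijection of the restricted order $\dashv$ onto $[1,k]$.
\end{enumerate}
The number of such bijections is $|K(\varphi^{-1}(0))|$, and it is $1$ when $\varphi^{-1}(0)=\emptyset$. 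This is exactly $m(\varphi)$.

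The step I expect to be the main obstacle is pinning down that the numeration conditions in Definition \ref{order} only restrict $\xi$ up to the order $\dashv$. In particular one must verify that the requirement "$\xi(i)\subset\xi(j)$ with both starting at $0$ implies $i>j$" is compatible with, and in fact implied by, admissibility. Caps starting at $0$ under a larger cap starting at $0$ are related by $\dashv$ through condition $1)$, so this needs care rather than being automatic. One must also confirm that the caps in $\varphi^{-1}(0)$ all have $s(C)=0$ or are otherwise unconstrained by condition $1)$ of Definition \ref{phi}. Once this is done, the map $\xi\mapsto\varphi_\xi$ with fibres of size $m(\varphi)$ gives $d_{g,f}=\sum_{\varphi\in H(g,f)}m(\varphi)$.
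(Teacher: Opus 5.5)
Your argument is correct and is essentially the paper's proof: the paper first passes to the bijection $\psi=\xi_{st}\xi^{-1}:\mathcal C(g)\rightarrow\mathcal C(f)$, then takes $\varphi=s\circ\psi$ and counts each fibre as the linear extensions of $\varphi^{-1}(0)$, which is exactly your map $\xi\mapsto\varphi_\xi$ together with your fibre count. The worries in your last paragraph are immediate and need no further work: clause $1)$ of $\dashv$ makes admissibility imply the nesting condition that a numeration imposes on caps at $0$, and $\varphi(C)=0\le s(C)$ holds for every cap.
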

\begin{proof}   Consider  the map 
$$
\alpha :\mathcal C(g)\rightarrow\mathcal C(f),\quad \beta(\xi)=\xi_{st}\xi^{-1}=\psi
$$
It is easy to check that $\alpha$ is a bijection between $Z(g,f)$ and the set $W(g,f)$ of $\psi:\mathcal C(g)\rightarrow\mathcal C(f)$ such that 

$1)$ $s\psi(C)\le s(C)$;

$2)$ if $ C\dashv C'$ then $s\psi(C)\le s\psi(C')$;

$3)$ if $s(C)=1$ then $s\psi(C)=1$;

$4)$ if $C\dashv C'$ and  $s(\psi(C)=s\psi(C')=0$ then $\psi(C)\dashv\psi(C')$.
Further  we have a map $S: W(g,f)\rightarrow H(g,f)$ such that $S(\psi)=s\circ\psi$. Clearly this map is a surjection. So we have 
$$
|Z(g,f)|=|W(g,f)|=\sum_{\varphi\in H(g,f)}|S^{-1}(\varphi)|= |H_0(g,f)|+\sum_{\varphi\in H_1(g,f)}|K(\varphi^{-1}(0))|
$$
$$
=\sum_{\varphi\in H(g,f)}m(\varphi)
$$
where $H_0(g,f)$ is the set of $\varphi\in H(g,f)$ such that $|\varphi^{-1}(0)|=0$ and $H_{1}(g,f)$ is its complement in $H(g,f)$.
\end{proof}

\begin{definition} Let us set  $H(g)=\cup_{f}H(g,f)$. Then it is easy to check that  the set of $\varphi: \mathcal C(g)\rightarrow \Bbb Z_{\ge0}$ such that $\varphi$ satisfied conditions $1),2),3)$ from Definition \ref{phi} and  if $\varphi(C)=\varphi(C')>0$ then $C=C'$.
Let us also denote by  $\varphi(g)$ the diagram such that $\varphi(g)_{core}=g_{core}$ and $\varphi(g)^{-1}(\times)$ is equal to the image of $\varphi$ as the set  and the multiplicity of zero is $|\varphi^{-1}(0)|$.

\end{definition}

\begin{corollary}
 The following equality holds true
$$
ch\, L(g)=\sum_{\varphi\in H(g)}(-1)^{\theta(\varphi)}2^{k(\varphi)}m(\varphi) ch\,E(\varphi(g))
$$ 
  where  $\theta(\varphi)=\theta(g)+\theta(\varphi(g))$ and $k(\varphi)=k(\varphi(g))-k(g)$.
 \end{corollary}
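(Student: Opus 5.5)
The plan is to derive this corollary by combining Corollary \ref{formula} with Proposition \ref{ordered}, and then replacing the sum over pairs $(f,\varphi)$ with a single sum over $H(g)$.

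\emph{Step 1: restrict the sum in Corollary \ref{formula}.} Corollary \ref{formula} gives
$$
ch\,L(g)=\sum_f(-1)^{\theta(f)+\theta(g)}2^{k(f)-k(g)}d_{gf}\,ch\,E(f).
$$
First I will note that $d_{gf}\neq 0$ forces $f_{core}=g_{core}$. The operators $T_i$ only move crosses, and the creation operators $Q_a$ leave the symbols $<,>$ in place. So only diagrams $f$ with the same core as $g$ contribute.

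\emph{Step 2: expand $d_{gf}$.} For such $f$, I substitute $d_{gf}=\sum_{\varphi\in H(g,f)}m(\varphi)$ from Proposition \ref{ordered}. This turns the sum into a double sum over pairs $(f,\varphi)$ with $\varphi\in H(g,f)$.

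\emph{Step 3: reindex by $\varphi$ alone.} The key step is to check that $\varphi\mapsto(\varphi(g),\varphi)$ is a bijection from $H(g)=\bigcup_f H(g,f)$ onto these pairs, i.e. that the union is disjoint and $f$ is recovered as $\varphi(g)$.
\begin{itemize}
\item Given $\varphi\in H(g,f)$, $\varphi$ is a multiset bijection onto $f^{-1}(\times)$. Hence its image, counted with multiplicities, is exactly $f^{-1}(\times)$, and in particular $|\varphi^{-1}(0)|=k(f)$.
\item Together with $f_{core}=g_{core}$, this shows that $f=\varphi(g)$. So each $\varphi$ lies in exactly one $H(g,f)$.
\item Conversely, take any $\varphi$ satisfying conditions $1),2),3)$ of Definition \ref{phi} and injective on positive values. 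Define $\varphi(g)$ as in the last definition. Its positive crosses sit at points $\varphi(C)\le s(C)$, and these must avoid $g^{-1}\{<,>\}$ so that $\varphi(g)$ is a legitimate weight diagram.
\end{itemize}
This last point is the one place needing care. I expect to get it from the description $H(g)=\{\varphi\}$ asserted in the preceding definition, which is stated as easy to check, together with the requirement that $\varphi$ map onto $f^{-1}(\times)$ for a genuine diagram $f$.

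\emph{Step 4: identify the coefficients.} Under this reindexing:
\begin{itemize}
\item $\theta(f)+\theta(g)$ becomes $\theta(\varphi)$;
\item $k(f)-k(g)$ becomes $k(\varphi)$;
\item $m(\varphi)$ is unchanged.
\end{itemize}
This yields exactly the stated formula. So the only nontrivial point is the disjointness and exhaustion in Step 3; everything else is bookkeeping on top of Corollary \ref{formula} and Proposition \ref{ordered}.
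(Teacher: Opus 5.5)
Your proposal is correct and follows the paper's own route: the paper says only that the corollary follows from Proposition~\ref{ordered}, which amounts to substituting $d_{gf}=\sum_{\varphi\in H(g,f)}m(\varphi)$ into Corollary~\ref{formula} and reindexing by $\varphi$, using that $\varphi\in H(g,f)$ forces $f=\varphi(g)$. The point you flag in Step 3 needs no extra argument, because $H(g)$ is defined as $\bigcup_f H(g,f)$; exhaustion is therefore automatic, and only the disjointness you already prove is needed.
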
 
A proof  easily follows from Proposition \ref{ordered}.

In the case of cap diagram of the second type we have the  theorem   below which can be proved  by the same way as for diagram of the first type.  Let $\mathcal C(g)$ be the cap diagram of the second type for a weight diagram $g$. We will call the cap   $C\in \mathcal C(g)$  special if $s(C)=0$ and for any $C'$ such that $s(C')=0$ we have $C\subset C'$.

\begin{definition}\label{order1}  Let us introduce the following partial order on $\mathcal C(g)$. We write  $C\dashv C'$ if one of the following conditions holds true:

$1)$  $C\supset C'$;

$2)$  $s(C)=0$, $C$ is not special and $ s(C')=t(C)+1+n_g(s(C'))-n_g(t(C))$;

A numeration $\xi$ of a cap diagram $\mathcal C(g)$  is called admissible if  it preserves the partial order. In other words  if $\xi(i)\dashv \xi(j)$ then $i<j$. 
\end{definition}

\begin{definition}\label{phi1} Let $g$ be a  weight diagrams and    $H(g)$ be the set  of $\varphi:\mathcal C(g)\rightarrow \Bbb Z_{\ge 0}$  such that: 

$1)$  for $C\in \mathcal C(g)$ we have $\varphi(C)\le s(C)$;

$2)$ if $C\dashv C'$ then $\varphi(C)\le\varphi(C')$;

$3)$  if $\varphi(C)=\varphi(C')>0$ then $C=C'$.
Let us also denote by  $\varphi(g)$ the diagram such that $\varphi(g)_{core}=g_{core}$ and $\varphi(g)^{-1}(\times)$ is equal to the image of $\varphi$ as the set  and the multiplicity of zero is $|\varphi^{-1}(0)|$.
\end{definition}

\begin{thm} 
 The following equality holds true
$$
ch\, L(g)=\sum_{\varphi\in H(g)}(-1)^{\theta(\varphi)}2^{k(\varphi)}m(\varphi)ch\,E(\varphi(g))
$$ 
  where  $\theta(\varphi)=\theta(g)+\theta(\varphi(g))$ and $k(\varphi)=k(\varphi(g))-k(g)$. 
\end{thm}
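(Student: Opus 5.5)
We follow the first-type argument step by step, replacing the first-type cap diagram by the second-type one and checking at each step that only the order $\dashv$ changes. Corollary \ref{formula} is unchanged: it expresses $ch\,L(g)$ through the numbers $d_{gf}=|X(g,f)|$ and does not depend on which cap diagram we use. So the goal is to show that $d_{gf}=\sum_{\varphi\in H(g),\,\varphi(g)=f}m(\varphi)$ with $H(g)$ as in Definition \ref{phi1}.

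The main change is in the creation operators. In the second type, the first cap starting at $0$ closes at the first balanced $\circ$, exactly like a cap from a positive point. Accordingly we use a modified operator $Q_0$: the first cross placed at $0$ behaves like a cross at a positive point. The rising operator $T$ applied to that cap then moves the cross exactly $i$ steps (up to $n_f$ corrections) with no shift of $+1$. Concretely, in Lemma \ref{main2} we set $\omega(b_p,i_p)=0$ for the special cap and keep $\omega=1$ (for $i_p>0$) for the other caps at $0$.

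Then we redo the Corollary of Lemma \ref{main2}. The value $a_p=1$ is no longer forced to come from $b_p=1$, because moving the special $0$-cap by one step lands at $1$. Hence condition $3)$ of Definition \ref{cond} reduces to: $a_p=0$ implies $b_p=0$ (at $1$ we only keep $b_p\le a_p$). This explains why condition $3)$ of Definition \ref{phi} disappears in Definition \ref{phi1}.

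Next we prove the analogue of Lemma \ref{eqv1} and of the admissibility lemma. Here we show that $g=Q_{a_r}\dots Q_{a_1}(g_{core})$, together with the $Y$-condition, is equivalent to $\xi$ being admissible for the order of Definition \ref{order1}. The argument for clause $1)$ is the same as before: a newly created cap never contains earlier caps.

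For clause $2)$ we use the same deletion argument. Suppose a non-special $0$-cap $C$ is followed immediately, with no $\circ$ in between, by $C'$, and suppose $C'$ has the smaller number. Deleting the caps in the order of their numbers, we look at the value of $a_j$ for the cap $C'$, i.e.\ the initial point it has at the moment $Q_{a_j}$ is applied (the analogue of $h(j)$ in the admissibility lemma). By the deletion argument this is a point it could only have in a non-special position, and this contradicts the $Y$-condition.

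The special cap is exactly where the old clauses $2)$ and $3)$ fail: deleting it shifts the following cap to start at the first balanced position, not at $1$, and this is consistent with the modified $Q_0$. Hence there is no extra relation for the special cap, and none coming from $s(C)=1$. The converse direction, reconstructing $a$ from an admissible $\xi$ by deleting the cross at $a_r=s(\xi(r))$ and inverting $Q_{a_r}$, goes through verbatim.

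Finally, Proposition \ref{ordered} gives the count. The map $\xi\mapsto\xi_{st}\xi^{-1}$ followed by $s$ sends the pairs $(\xi,f)$ onto $H(g)$. The fibre over $\varphi$ has size $m(\varphi)$, since the only freedom left is how to order the caps that $\varphi$ sends to $0$ compatibly with $\dashv$. Combining this with Corollary \ref{formula} and the sign computation there gives the formula; in the sign computation, $\omega=0$ for the special cap changes $i_1+\dots+i_r$ by one, compensated by the same shift in $b-n_f(b)$.

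The main obstacle is the modified rule for $Q_0$ and the verification of Lemma \ref{main2} for it. We have to check that jumping the other rising operators over the special cap is still realized by a creation operator, in particular when several crosses sit at $0$ and the special one is the innermost. We would first verify this on Example \ref{adm}, comparing the two cap diagrams there, and only then run the induction on $r$.
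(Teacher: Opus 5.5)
Your outline follows the only route the paper indicates; the paper just says the first-type argument carries over. However, two of your steps fail, and they fail because the displayed formula is not correct as stated for second-type diagrams.

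\textbf{The sign.} In the proof of Corollary \ref{formula} the sign is $(-1)^{i_1+\dots+i_r}$, one factor $-1$ for each elementary application of a rising operator. It equals $(-1)^{\theta(f)+\theta(g)}$ only because each elementary move changes $\theta$ by an odd amount.

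A move of a cross from $0$ to $c=t(C)$ changes $\theta$ by $c-n_f(c)-1$, where the $-1$ is the drop of $k$. Its parity is the parity of the surplus of $\circ$ over $\times$ in $(0,c)$.
\begin{itemize}
\item For first-type $0$-caps this surplus is odd, so the move is odd, as required.
\item For the special cap the surplus is $0$, so $\theta$ keeps its parity. The same holds for the other second-type $0$-caps, which end at the 3rd, 5th, \dots\ free $\circ$, as in Example \ref{adm}.
\end{itemize}
Your $\omega$ only relates $i_p$ to the creation index $a_p$. It does not change the fact that this is one application of $T$, so nothing compensates. In fact $(-1)^{i_1+\dots+i_r}=(-1)^{\theta(f)+\theta(g)+k(f)-k(g)}$.

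Already for $\mathfrak q(2)$ the stated formula fails. Take $g$ a single cross at $1$ and $f$ a single cross at $0$, so $k(f)=1$. Formula (\ref{Brun}) with second-type caps gives $E(1)=L(1)+2L(0)$ and $E(0)=L(0)$, hence $L(1)=E(1)-2E(0)$. But $\theta(g)=\theta(f)=1$, so the theorem predicts $+2E(0)$. The correct sign is $(-1)^{\theta(\varphi)+k(\varphi)}$.

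\textbf{The order.} Your claim that the special cap imposes no relation is also false. Take diagrams without $<,>$, with $g^{-1}(\times)=\{0,2\}$ and $f^{-1}(\times)=\{0^2\}$.
\begin{itemize}
\item The caps of $g$ are $0\to1$ (special) and $2\to3$. They are incomparable in Definition \ref{order1}, so $\varphi\equiv0$ gets $m(\varphi)=2$.
\item The caps of $f$ are $0\to3$ (number $1$) and $0\to1$ (number $2$). The only solution of $g=T_1^{i_1}T_2^{i_2}(f,\xi_{st})$ is $(i_1,i_2)=(0,2)$, so $d_{gf}=1$.
\item Directly, (\ref{Brun}) gives $E(0,2)=L(0,2)+L(0,1)$ and $E(0,1)=L(0,1)+2L(0^2)$. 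Hence $L(0,2)=E(0,2)-E(0,1)+2E(0^2)$, whereas the theorem gives $-4E(0^2)$.
\end{itemize}
So a special cap followed by another cap with no $\circ$ in between must precede it, exactly as in clause $2)$ of Definition \ref{order}; only clause $3)$ disappears.

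Your $\omega$ rule is wrong as well. For the same $f$ and $(i_1,i_2)=(1,1)$, the outer $0$-cap has become special by the time $T_1$ acts. The result is $\{1,3\}=Q_1Q_1(f_{core})$, so $\omega=0$ there, whereas $\omega=1$ would give $Q_1Q_2(f_{core})=\{1,4\}$. In these examples Lemma \ref{main2} holds with the unmodified $Q_0$ and $\omega\equiv0$.

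A correct second-type theorem therefore needs both corrections. The verification you postponed, running Lemma \ref{main2} and the deletion argument honestly on small cases, is precisely where they show up.
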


\begin{example} 
Let  us consider  the atypical block of Lie of superalgebra $\frak q(2)$. The corresponding diagrams are : $|f^{-1}(\times)|=1,\,|f^{-1}(>)|=|f^{-1}(<)|=0$.  For all such diagram we have $k(f)=0,2$ and we number them   simply by $a=f^{-1}(\times)$.  Therefore we have second type of cap diagram and the following character formula
\begin{gather*}
L(a)=\sum_{0\le b\le a}(-1)^{a+b}E(b)+(-1)^{a+1}2E(0)
\end{gather*}
where $a\ge0$.
\end{example}
\begin{example}

Now let us  consider the case $\frak q(3)$. For the atypical block  we have the same set of diagram as above but the corresponding cap diagram is of the first type. Therefore we have 
\begin{gather*}
L(0)=E(0),\,L(1)=E(1)\\
L(a)=\sum_{1\le b\le a}(-1)^{a+b}E(b)+(-1)^{a+1}2E(0)
\end{gather*}
where $a\ge2$
\end{example}
{\bf Acknowledgement.}
This work is done at the Center of Pure Mathematics MIPT. It is financially supported by
Russian Science Foundation grant 26-11-00115.

{\bf Conflict of Interest}
 The author has no conflict of interest to declare that are relevant to this article.


\begin{thebibliography}{99}

\bibitem{B}
Brundan, J.: Kazhdan-Lusztig polynomials and character formulae for Lie superalgebra $\frak{gl}(m|n).$
J. Amer. Math. Soc. 16(1), (2003)

\bibitem{B1}
J. Brundan, Kazhdan-Lusztig polynomials and character formulae for the Lie superalgebra q(n),
Adv. Math. 182 (2004) 28-77.

\bibitem{BS} 
Brundan, J., Stroppel, C.: Highest weight categories arising from Khovanov's diagram algebra IV:
the general linear supergroup. J. Eur. Math. Soc. 14, (2012)
 \bibitem{PS}
I. Penkov and V. Serganova, Characters of irreducible G-modules and cohomology of G/P for
the supergroup G= Q(N), J. Math. Sci., 84 (1997), 1382--1412.
\bibitem{PS1}
I. Penkov and V. Serganova, 
Characters of Finite-Dimensional Irreducible  $q(n)$-Modules, Letters in Mathematical Physics 40: 147--158, 1997. 

\bibitem{Ser2}
A. Sergeev, Combinatorics of Irreducible Characters for Lie Superalgebra $\frak{gl}(m,n).$
 Transformation Groups 31, 2021--2043 (2026). https://doi.org/10.1007/s00031-026-09961-3
\bibitem{SZ1} 
Y. Su and R. B. Zhang, Character and dimension formulae for queer Lie superalgebra, Comm. Math. Phys. 333
(2015), no. 3, 1465--1481.
\bibitem{St}
R.P.  Stanley Enumerative combinatorics.Vol. 1, 2nd ed. Cambridge University Press. 2012.
\end{thebibliography}
\end{document}